\documentclass[11pt]{article}
\usepackage[margin=1in]{geometry}
\usepackage{amsmath,amssymb,amsthm}
\usepackage{mathtools}
\usepackage{hyperref}

\newtheorem{theorem}{Theorem}[section]
\newtheorem{proposition}[theorem]{Proposition}

\newtheorem{corollary}[theorem]{Corollary}
\theoremstyle{definition}
\newtheorem{definition}[theorem]{Definition}
\newtheorem{example}[theorem]{Example}
\newtheorem{openproblem}[theorem]{Open Problem}
\newtheorem{remark}[theorem]{Remark}
\theoremstyle{remark}

\newcommand{\R}{\mathbb{R}}
\newcommand{\C}{\mathbb{C}}
\newcommand{\D}{\mathbb{D}}
\newcommand{\HH}{\mathbb{C}_+}
\newcommand{\im}{\operatorname{Im}}
\newcommand{\re}{\operatorname{Re}}

\title{Arithmetic Uniformization of Rigid Elliptic Structures: \\ From Rigid to Standard Vekua without the Beltrami Equation
}

\author{Daniel Alay\'on-Solarz\thanks{Email: danieldaniel@gmail.com}}
\date{March 2026}

\begin{document}
\maketitle

\begin{abstract}
For the \emph{rigid} subclass of variable elliptic structures---characterized equivalently by the inviscid Burgers law $\lambda_x+\lambda\lambda_y=0$ or the self--dilatation $\mu_{\bar z}=\mu\mu_z$---we show that the auxiliary Beltrami equation in the classical Vekua pipeline is unnecessary.
The canonical coordinate $\xi=y-\lambda x$, computed by arithmetic from the spectral parameter~$\lambda$, reduces every rigid variable--algebra Vekua equation to a standard Vekua equation in~$\xi$ on any open set where the characteristic Jacobian $\Phi=\bar\xi_x+\lambda\bar\xi_y$ does not vanish, with global reduction on domains where $\xi$ is injective.
No PDE is solved at any stage.
\end{abstract}

\section{Introduction}\label{sec:intro}

\subsection{The Vekua workflow and its auxiliary Beltrami equation}

The theory of generalized analytic functions, initiated by Vekua \cite{Vekua} and Bers \cite{Bers}, reduces a first--order elliptic system in the plane to a complex Vekua equation
\begin{equation}\label{eq:vekua-standard}
\partial_{\bar z} w + Aw + B\bar w = F
\end{equation}
via two steps:
\begin{enumerate}
\item[(i)] \textbf{Uniformization of the principal part.}  Solve an auxiliary Beltrami equation $f_{\bar z} = \mu f_z$ to find a homeomorphism $f$ that reduces the variable--coefficient principal part to the standard Cauchy--Riemann operator $\partial_{\bar z}$.
\item[(ii)] \textbf{Complex rewriting.}  Express the system in the new coordinates as \eqref{eq:vekua-standard}, where $A$, $B$, $F$ absorb the lower--order terms and the Jacobian of the uniformizing map.
\end{enumerate}
Step~(i) requires the measurable Riemann mapping theorem (MRMT) or, in practice, a numerical Beltrami solver under uniform ellipticity.  Vekua himself observed \cite[Chapter~2]{Vekua} that when the principal--part coefficients satisfy $a_{11}=a_{22}$, $a_{21}=-a_{12}$---the condition corresponding to $(\alpha,\beta) = (1,0)$, i.e.\ standard~$\C$---a substitution suffices and no Beltrami equation is needed.

\subsection{Eliminating the auxiliary Beltrami equation}

In \cite{AS2011}, we extended Vekua's observation: for \emph{any} first--order elliptic system, the auxiliary Beltrami equation can be replaced by a substitution, provided one works in the variable algebra $A_z = \R[X]/(X^2+\beta(z)X+\alpha(z))$ with $(\alpha,\beta)$ extracted from the system's principal--part coefficients.  The ellipticity condition $4\alpha-\beta^2>0$ is automatic.

That result provided a representation but not a theory.

\subsection{The rigid class: completing the program}

The monograph \cite{AS-monograph} develops the fundaments of a theory of variable elliptic structures, identifying a distinguished subclass---the \emph{rigid} structures---characterized by the vanishing of the intrinsic obstruction $G = i_x + i\cdot i_y$.  On this class, the Cauchy--Pompeiu formula, kernel holomorphicity, and a complete function theory become available.

The present paper completes the program initiated in \cite{AS2011} for the rigid class by proving:

\begin{theorem}[Main theorem, informal]\label{thm:main-informal}
Let $(\alpha,\beta)$ be a rigid variable elliptic structure with spectral parameter $\lambda$, and let
\[
\xi=y-\lambda(x,y)\,x, \qquad \Phi=\bar\xi_x+\lambda\,\bar\xi_y.
\]
\begin{enumerate}
\item[\textbf{(Local)}] At every point where $\Phi\neq 0$, every rigid variable--algebra Vekua equation is explicitly reducible, in a neighbourhood of that point, to a standard Vekua equation in the coordinate $\xi$.
\item[\textbf{(Global)}] On any connected open set $\Omega'$ where $\Phi\neq 0$ and $\xi$ is injective, the reduction extends to a global standard Vekua equation on the planar domain $\xi(\Omega')$.
\end{enumerate}
The reduction is computed by arithmetic from the spectral parameter; no PDE is solved at any stage of the pipeline.
\end{theorem}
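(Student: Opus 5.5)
The plan is to view the rigid structure through its spectral parameter $\lambda$, a complex-valued function with $\im\lambda\neq0$ (ellipticity). Rigidity means $\lambda_x+\lambda\lambda_y=0$. The variable-algebra Cauchy--Riemann operator is, up to a nonvanishing factor, the complex vector field $L=\partial_x+\lambda\partial_y$, and a variable-algebra Vekua equation can be written as $Lw+aw+b\bar w=f$ after identifying $A_z$ with $\C$ via $1\mapsto 1$, $i\mapsto\lambda$.

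\textbf{Step 1: $\xi$ is a first integral of $L$.} I would compute
\[
L\xi=(-\lambda_x x-\lambda)+\lambda(1-\lambda_y x)=-x(\lambda_x+\lambda\lambda_y)=0 .
\]
This uses only the Burgers law, with no PDE solved; it is the method of characteristics in closed form. Hence $\xi$ plays the role of the holomorphic coordinate $\zeta$ in the classical pipeline.

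\textbf{Step 2: Jacobian and local diffeomorphism.} The real Jacobian of $(x,y)\mapsto\xi$ is $|\xi_z|^2-|\xi_{\bar z}|^2$. Expressing $\partial_x,\partial_y$ in terms of $\partial_\xi,\partial_{\bar\xi}$ and using $L\xi=0$ gives
\[
L=(L\bar\xi)\,\partial_{\bar\xi}=\Phi\,\partial_{\bar\xi}.
\]
I would then show that $\Phi\neq0$ forces the Jacobian to be nonzero. Indeed, if $d\xi\wedge d\bar\xi=0$ at a point, then $d\bar\xi$ would be a multiple of $d\xi$, hence annihilated by $L$, contradicting $\Phi\neq0$. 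By the inverse function theorem, $\xi$ is a local $C^1$ diffeomorphism near every point with $\Phi\neq0$.

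\textbf{Step 3: transport the equation.} Dividing the transformed equation by $\Phi$ gives
\[
\partial_{\bar\xi}w+(a/\Phi)\,w+(b/\Phi)\,\bar w=f/\Phi ,
\]
with coefficients expressed through the inverse map. Any algebra-identification factor (the normalization of the $A_z$-valued unknown and the conjugation $\bar w$ in $A_z$ versus in $\C$) is absorbed algebraically into $A$, $B$ and $F$, since it is pointwise arithmetic in $\lambda$.

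\textbf{Step 4: globalization.} On a connected $\Omega'$ where $\Phi\neq0$ and $\xi$ is injective, $\xi$ is an injective local diffeomorphism, hence a diffeomorphism onto the open set $\xi(\Omega')$. The local equations then patch into a single standard Vekua equation there.

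\textbf{Main obstacle.} I expect the hardest step to be the precise bookkeeping in Step 3. One must check that the rigid variable-algebra Vekua operator really is a nonvanishing multiple of $L$ acting on a $\C$-valued function, and that conjugation in $A_z$ corresponds to ordinary conjugation after the identification. Otherwise the $\bar w$ term could pick up a $\lambda$-dependent twist. I would try first to verify this via the explicit isomorphism $A_z\cong\C$ sending $i\mapsto\lambda$, and then confirm that the twist factor is again arithmetic in $\lambda$.
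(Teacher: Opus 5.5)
Your proposal is correct and follows the paper's pipeline: transport by $i\mapsto\lambda$, then $L\xi=0$ from the Burgers law, then $L=\Phi\,\partial_{\bar\xi}$, then division by $\Phi$, and finally globalization via injectivity and the inverse function theorem.

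The one genuinely different ingredient is your nondegeneracy argument. The paper (Proposition~\ref{prop:jacobian}) computes $\det\partial(p,q)/\partial(x,y)=-\tfrac{i}{2}(1-x\lambda_y)\Phi$. It then uses the factorization $\Phi=2i\,\im\lambda\,(1-x\bar\lambda_y)$ and a conjugation argument to show that the two factors vanish together. Your argument is shorter: $d\xi\wedge d\bar\xi=0$ would force $d\bar\xi\in\C\,d\xi$, hence $\Phi=L\bar\xi=0$. This gives the needed implication in one line, with no factorization. The converse is equally quick, since $L=\Phi\,\partial_{\bar\xi}$ and $L\neq0$. What the paper's explicit formula buys in addition is the sign and size of the Jacobian, e.g.\ $\im\lambda/|J|^2>0$ on the Burgers domain.

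The step you flag as the main obstacle is the paper's universal transport identity (Theorem~\ref{thm:intertwine}), and there is no twist. Since $\hat\imath=-\beta-i\mapsto-\beta-\lambda=\bar\lambda$, one has $(\widehat W)_\lambda=\overline{W_\lambda}$. Moreover $2(\partial_{\bar z}W)_\lambda=(W_\lambda)_x+\lambda(W_\lambda)_y$ holds exactly. This gives $A'=2A_\lambda/\Phi$, $B'=2B_\lambda/\Phi$ and $F'=2F_\lambda/\Phi$.

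The only point beyond pointwise algebra, which your sketch does not mention, is the moving generator. The pointwise isomorphism $A_z\cong\C$ commutes with $\partial_x$ and $\partial_y$ only because the transport of $i_x$ is $\lambda_x$ and the transport of $i_y$ is $\lambda_y$. Equivalently, the terms $VG$ and $V(\lambda_x+\lambda\lambda_y)$ match by Proposition~\ref{prop:transport-law}, and under rigidity both vanish.
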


\subsection{Consequences}

The reduction to standard Vekua has an immediate and far--reaching consequence.

\begin{corollary}[Uniformization of the rigid holomorphic class]\label{cor:uniform-hol}
On every open set $\Omega'\subseteq\Omega$ where $\Phi\neq 0$ and $\xi$ is injective, the canonical coordinate $\xi = y - \lambda x$ establishes a ring isomorphism between rigid holomorphic functions on $\Omega'$ and standard holomorphic functions on the planar domain $\xi(\Omega')\subset\C$.
No PDE is solved: the isomorphism is computed by arithmetic from $\lambda$.
\end{corollary}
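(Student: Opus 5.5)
The plan is to obtain the corollary as the homogeneous case $A=B=F=0$ of the global part of Theorem~\ref{thm:main-informal}. I expect the reduction to be explicit: a function $w$ on $\Omega'$ is rigid holomorphic (annihilated by the rigid Cauchy--Riemann operator $\partial_x+\lambda\,\partial_y$, or its $\bar\partial$-type analogue in the variable algebra) exactly when $W=w\circ\xi^{-1}$ satisfies the standard Cauchy--Riemann equation on $\xi(\Omega')$. So I would define the map $\Psi:w\mapsto w\circ\xi^{-1}$ and show that it is a bijection between the two function classes and respects the ring operations.

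\textbf{Key identity.} First I would record the chain rule for the canonical coordinate. Rigidity, in the form $\lambda_x+\lambda\lambda_y=0$, gives
\[
\xi_x+\lambda\xi_y=-\lambda-x\lambda_x+\lambda(1-x\lambda_y)=-x(\lambda_x+\lambda\lambda_y)=0 .
\]
So $\xi$ is itself rigid holomorphic. For any $C^1$ function $W$ of $\xi$, this yields
\[
(\partial_x+\lambda\partial_y)(W\circ\xi)=(W_{\bar\xi}\circ\xi)\,\Phi .
\]
Because $\Phi\neq0$ on $\Omega'$, the condition $(\partial_x+\lambda\partial_y)w=0$ is equivalent to $W_{\bar\xi}=0$.

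Pointwise invertibility comes from the Jacobian: it equals $|\xi_{\bar z}|^2-|\xi_z|^2$ up to sign and a constant, and it is controlled by $\Phi$. Hence $\xi$ is a local diffeomorphism, and by injectivity a diffeomorphism onto the open set $\xi(\Omega')$. This makes $\Psi$ well defined and bijective, with inverse $W\mapsto W\circ\xi$.

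\textbf{Ring structure.} Composition with a fixed map obviously commutes with pointwise addition and multiplication of $\C$-valued functions. The substantive point is that the rigid product in the variable algebra $A_z$ must correspond to ordinary complex multiplication. I would identify $A_z$ with $\C$ through the root $i(z)=\lambda$ of $X^2+\beta X+\alpha$, so that the algebra element $u+v\,i(z)$ is read as the complex number $u+v\lambda$. This identification is a field isomorphism $A_z\cong\C$ at each point, and it is the one in which the main theorem is stated. Under it, the rigid product is the pointwise complex product, and composition with $\xi^{-1}$ preserves it. Constants and the unit map to themselves.

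\textbf{Main obstacle.} The hardest step is making the identification of $A_z$-valued functions with $\C$-valued functions compatible with both the rigid holomorphy notion of \cite{AS-monograph} and the operator $\partial_x+\lambda\partial_y$. Concretely, one must check that "rigid holomorphic" means exactly annihilation by this operator after the identification. If the monograph's definition instead uses the real-pair form $u_x=\ldots$, I would first rewrite that system in $\lambda$-form and use $G=0$ to kill the extra term that arises from differentiating $i(z)$. With that in place, the corollary follows. Every step uses only the explicit formulas for $\xi$ and $\Phi$, so no PDE is solved.
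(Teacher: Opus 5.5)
Your proposal is correct and is essentially the paper's argument: rigidity gives $\xi_x+\lambda\xi_y=0$ and hence $f_x+\lambda f_y=f_{\bar\xi}\Phi$; $\Phi\neq0$ plus injectivity makes $(x,y)\mapsto(\re\xi,\im\xi)$ a $C^1$ diffeomorphism onto $\xi(\Omega')$ (Propositions~\ref{prop:jacobian} and~\ref{prop:uniformize}); and the transport map $U+Vi\mapsto U+V\lambda$ is a pointwise algebra isomorphism intertwining $2\partial_{\bar z}$ with $\partial_x+\lambda\partial_y$ (Theorem~\ref{thm:intertwine}), so composition with $\xi$ respects sums and products. Your step ``the Jacobian is controlled by $\Phi$'' is made precise by combining $\det\partial(p,q)/\partial(x,y)=-\tfrac{i}{2}(1-x\lambda_y)\Phi$ with $\Phi=2i\,\im\lambda\,(1-x\bar\lambda_y)$, which gives $\det=|\Phi|^2/(4\,\im\lambda)>0$ exactly where $\Phi\neq0$.
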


\begin{remark}[Inheritance of classical Vekua theory]\label{rmk:inheritance}
On every open set where $\Phi\neq 0$, a rigid variable--algebra Vekua equation is locally equivalent, via the canonical coordinate $\xi$, to a standard Vekua equation.
Consequently, the classical local theory---including distributional formulations, the similarity principle, $L^p$ regularity, and Weyl-type results---transfers in $\xi$--coordinates.
On any domain where $\xi$ is injective, these results transfer globally on $\xi(\Omega')$, subject to the usual hypotheses on the transformed coefficients.
\end{remark}

The non--trivial distributional and regularity questions arise only in the non--rigid case ($G\neq 0$), where the canonical coordinate is unavailable and the classical toolkit does not transfer.
\subsection{Plan of the paper}

Section~\ref{sec:algebra} develops the algebraic picture (variable algebra, moving generator, Cauchy--Riemann operator).
Section~\ref{sec:spectral} introduces the transport picture (spectral parameter $\lambda$, universal intertwining, Burgers equation).
Section~\ref{sec:poincare} develops the Poincar\'e disk picture (Beltrami coefficient $\mu$, self--dilatation).
Section~\ref{sec:canonical} proves the uniformization: the canonical coordinate, the general solution, and the uniformizing diffeomorphism.
Section~\ref{sec:vekua-reduction} contains the main computation: the explicit reduction of the rigid variable--algebra Vekua equation to standard Vekua, and discusses the relation to the measurable Riemann mapping theorem.

\section{The algebraic picture}\label{sec:algebra}

\subsection{The structure polynomial and the moving generator}

Let $\Omega\subset\R^2$ be open with coordinates $(x,y)$.
Throughout this paper we use $z=(x,y)$ as a label for points in the real plane $\R^2$, \emph{not} as a standard complex coordinate; the relevant complex coordinate will be $\xi$, constructed in Section~\ref{sec:canonical}.

A \emph{variable elliptic structure} on $\Omega$ is a pair of functions $\alpha,\beta\in C^1(\Omega,\R)$ satisfying the ellipticity condition
\begin{equation}\label{eq:ellipticity}
\Delta(x,y) := 4\alpha(x,y)-\beta(x,y)^2 > 0 \qquad\text{on }\Omega.
\end{equation}
The \emph{structure polynomial} is $P(X) = X^2+\beta X+\alpha$.
At each $z=(x,y)\in\Omega$, define the fiber algebra
\[
A_z := \R[X]\big/(X^2+\beta(z)X+\alpha(z)\big),
\]
generated by $1$ and the \emph{moving generator} $i=i(x,y)$ satisfying
\begin{equation}\label{eq:struct-rel}
i^2+\beta\, i+\alpha = 0.
\end{equation}
Every $A$--valued section is written $W = U+Vi$ with $U,V:\Omega\to\R$.

\begin{example}[Classical complex numbers]
$\alpha=1$, $\beta=0$: the standard imaginary unit, constant on $\Omega$.
\end{example}

\begin{example}[Constant elliptic algebras]
$\alpha,\beta$ constant with $4\alpha-\beta^2>0$: see for example \cite{AV2010}.
\end{example}

\begin{example}[Genuinely variable structures]
When $\alpha$ and $\beta$ depend nontrivially on $(x,y)$, the generator $i(x,y)$ moves: the algebraic identity it satisfies changes from point to point.
This is the setting of \cite{AS-monograph}.
\end{example}

\subsection{The Cauchy--Riemann operator and rigidity}

The algebra--level Cauchy--Riemann operator is
\begin{equation}\label{eq:dbar-alg}
\partial_{\bar z} := \frac{1}{2}(\partial_x + i\,\partial_y),
\end{equation}
acting on $A$--valued sections.

The \emph{intrinsic obstruction} is
\begin{equation}\label{eq:obstruction}
G := i_x + i\,i_y = 2\,\partial_{\bar z}\,i.
\end{equation}

\begin{definition}[Rigidity]\label{def:rigidity}
A variable elliptic structure is \emph{rigid} if $G=0$, i.e.\ $i_x + i\,i_y = 0$.
\end{definition}

\begin{proposition}[Homogeneity]\label{prop:homog}
The Cauchy--Riemann system $\partial_{\bar z} W = 0$, expanded as
\[
2\,\partial_{\bar z} W = (U_x - \alpha V_y + VG_0) + (V_x + U_y - \beta V_y + VG_1)\,i,
\]
where $G = G_0+G_1 i$, is homogeneous (no zeroth--order terms in $V$) if and only if the structure is rigid.
\end{proposition}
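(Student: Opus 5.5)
The plan is a direct expansion of $2\,\partial_{\bar z}W$ for $W=U+Vi$, using the Leibniz rule on the product $Vi$ and then the structure relation \eqref{eq:struct-rel} to reduce everything to the basis $\{1,i\}$ of $A_z$. The one preliminary point is that $i_x$ and $i_y$, and hence $G$, actually lie in the fiber algebra $A_z$, so that writing $G=G_0+G_1 i$ with real $G_0,G_1$ makes sense.

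\emph{Step 1: $i_x,i_y\in A_z$.} Differentiating $i^2+\beta i+\alpha=0$ in $x$ gives
\[
(2i+\beta)\,i_x=-(\alpha_x+\beta_x i).
\]
The element $2i+\beta$ satisfies $(2i+\beta)^2=4i^2+4\beta i+\beta^2=\beta^2-4\alpha=-\Delta<0$ by \eqref{eq:ellipticity}. Hence $2i+\beta$ is invertible in $A_z$, with inverse $-(2i+\beta)/\Delta$. It follows that $i_x=(2i+\beta)(\alpha_x+\beta_x i)/\Delta$ is an $A_z$-valued section, with real coefficients that are continuous since $\alpha,\beta\in C^1$. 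The same argument applies to $i_y$. Therefore $G=i_x+i\,i_y$ has a unique decomposition $G_0+G_1 i$ with $G_0,G_1$ real, each expressed in terms of $\alpha,\beta$ and their first derivatives.

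\emph{Step 2: the expansion.} By the Leibniz rule,
\[
2\,\partial_{\bar z}W=U_x+iU_y+(V_x+iV_y)\,i+V(i_x+i\,i_y).
\]
The first-order part contains $i^2V_y$, which the structure relation rewrites as $(-\alpha-\beta i)V_y$. Collecting the $1$- and $i$-components, and substituting $G=G_0+G_1 i$ into the last term, gives exactly the stated formula
\[
2\,\partial_{\bar z}W=(U_x-\alpha V_y+VG_0)+(V_x+U_y-\beta V_y+VG_1)\,i.
\]

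\emph{Step 3: the equivalence.} Since $\{1,i\}$ is a basis of $A_z$ at each point, $\partial_{\bar z}W=0$ is equivalent to the real system obtained by setting both components to zero. In that system the only zeroth-order terms are $VG_0$ and $VG_1$. If the structure is rigid, then $G=0$, so $G_0=G_1=0$ and the system is homogeneous.

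Conversely, suppose the system has no zeroth-order terms, meaning the coefficient functions of $V$ vanish identically. This can be checked by testing on $U=0$, $V\equiv 1$, which gives $2\,\partial_{\bar z}i=G$. Hence $G_0=G_1=0$, that is $G=0$, and the structure is rigid. This also confirms the identity $G=2\,\partial_{\bar z}i$ recorded in \eqref{eq:obstruction}.

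The only step that is not routine is Step 1: one must justify that the derivatives of the moving generator stay in the fiber algebra. Ellipticity is exactly what makes $2i+\beta$ invertible and so guarantees this. Once that is settled, the rest is bookkeeping in the basis $\{1,i\}$.
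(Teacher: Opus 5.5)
Your proposal is correct and follows the paper's own route: the paper carries out the same expansion $2\partial_{\bar z}W=(U_x-\alpha V_y)+(V_x+U_y-\beta V_y)\,i+V(i_x+i\,i_y)$ in the proof of Proposition~\ref{prop:leibniz}, and homogeneity then amounts to $G_0=G_1=0$, i.e.\ $G=0$, because $\{1,i\}$ is a basis. Your Step~1, showing $i_x,i_y\in A_z$ through the invertibility of $2i+\beta$, is the same differentiation of the structure relation the paper performs in the proof of Proposition~\ref{prop:transport-law}, so it adds rigor without changing the argument.
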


\begin{remark}[The 2011 paper and rigidity]
The homogeneous system
\[
U_x - \alpha V_y = 0, \qquad V_x + U_y - \beta V_y = 0
\]
is precisely the system obtained in \cite{AS2011} by direct substitution from the elliptic system, without solving a Beltrami equation.
That paper implicitly worked in the rigid case: the homogeneity of the system---the absence of zeroth--order terms in $V$---is equivalent to rigidity.
The present paper makes this connection explicit and develops its consequences.
\end{remark}

\begin{proposition}[Leibniz rule]\label{prop:leibniz}
The operator $\partial_{\bar z}$ satisfies the Leibniz rule
\[
\partial_{\bar z}(WZ) = (\partial_{\bar z} W)Z + W(\partial_{\bar z} Z)
\]
for all $A$--valued $C^1$ sections $W,Z$, without any rigidity assumption.
This was established in \cite[v5]{AS-monograph}; earlier versions incorrectly restricted the Leibniz rule to rigid structures.
\end{proposition}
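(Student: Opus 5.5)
The Leibniz rule is a pointwise algebraic identity, so I would verify it directly in components rather than through any abstract argument. Write $W=U+Vi$ and $Z=P+Qi$ with $U,V,P,Q$ real $C^1$ functions. The product is computed in the fiber algebra $A_z$ using the structure relation \eqref{eq:struct-rel}:
\[
WZ = (UP-\alpha VQ) + (UQ+VP-\beta VQ)\,i .
\]
The key observation I would use is that $i$ itself is a $C^1$ section of the (commutative) algebra bundle, and that $\partial_x,\partial_y$ act as derivations on products of sections. The only subtlety is that $i$ is not constant, so one must also differentiate $i$.

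Rather than expanding coefficients, I would treat $W$, $Z$ and $WZ$ as polynomials in the section $i$ with real $C^1$ coefficients. Working in the commutative ring of $C^1$ sections, the product $W\cdot Z=UP+(UQ+VP)i+VQ\,i^2$ holds literally. Since $i^2=-\beta i-\alpha$ is an identity between sections, reducing modulo the structure polynomial does not change the section.

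Next I would apply the ordinary product rule for $\partial_x$ and for $\partial_y$ to products of $C^1$ sections. Concretely, I would note that the componentwise partial derivatives $\partial_x(U+Vi)=U_x+V_xi+Vi_x$ define a derivation on the ring of $C^1$ sections. This holds because multiplication is a bilinear expression in the components whose structure constants $\alpha,\beta$ are $C^1$. Equivalently, I would embed $A_z\cong\C$ via a root of $P$ depending $C^1$ on $z$, which is possible by ellipticity, and reduce to the usual product rule for complex-valued functions. Then
\[
\partial_x(WZ)=W_xZ+WZ_x,\qquad \partial_y(WZ)=W_yZ+WZ_y .
\]

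Finally, multiplying the second identity by $i$ and using commutativity and associativity of $A_z$ gives $i(W_yZ+WZ_y)=(iW_y)Z+W(iZ_y)$. Adding the two identities and halving yields $\partial_{\bar z}(WZ)=(\partial_{\bar z}W)Z+W\partial_{\bar z}Z$. No use of $G$ is made at any point, so rigidity is not needed.

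The main point requiring care is the derivation property in the presence of the moving generator. One must check that the terms containing $i_x$ and $i_y$, together with those containing derivatives of $\alpha,\beta$ arising from the reduction of $i^2$, all match. The cleanest route is the $C^1$ embedding $i\mapsto$ the root $\tfrac{-\beta+\sqrt{\Delta}\,\sqrt{-1}}{2}$, under which every identity becomes the classical complex product rule.
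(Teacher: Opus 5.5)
Your proposal is correct and follows essentially the same route as the paper: establish $(WZ)_x=W_xZ+WZ_x$ and $(WZ)_y=W_yZ+WZ_y$ for the full derivative $W_x=U_x+V_xi+Vi_x$, then multiply the second by $i$ and add, using commutativity of $A_z$, with no use of $G$. Your justification of the two product rules through the $C^1$ embedding $i\mapsto\lambda$ is more precise than the paper's appeal to bilinearity and difference quotients, because it fixes $i_x$ as the element with $(2i+\beta)\,i_x=-\alpha_x-\beta_x i$ (the preimage of $\lambda_x$); that choice is what makes the $\alpha_x,\beta_x$ terms coming from $i^2=-\beta i-\alpha$ cancel against the $i_x$ terms, which your first justification (bilinearity with $C^1$ structure constants) does not explain on its own.
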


\begin{proof}
Since $A_z$ is commutative and pointwise multiplication is bilinear, the standard product rule gives
\begin{equation}\label{eq:real-leibniz}
(WZ)_x = W_x\, Z + W\, Z_x, \qquad (WZ)_y = W_y\, Z + W\, Z_y,
\end{equation}
where $W_x = U_x + V_x\,i + V\,i_x$ denotes the full derivative of $W = U+Vi$ (including the moving generator).
These are identities in the fiber algebra $A_z$; one verifies them by the usual difference--quotient argument, which requires only bilinearity of the pointwise product.

Now compute:
\begin{align*}
2\,\partial_{\bar z}(WZ)
&= (WZ)_x + i\,(WZ)_y \\
&= W_x Z + WZ_x + i(W_y Z + WZ_y) \\
&= (W_x + iW_y)\,Z + W\,(Z_x + iZ_y) \\
&= 2(\partial_{\bar z} W)\,Z + W\cdot 2(\partial_{\bar z} Z).
\end{align*}
The rearrangement in the third line uses commutativity: $i\,W\,Z_y = W\,i\,Z_y$.

This completes the proof: the Leibniz rule is a formal consequence of the real product rule \eqref{eq:real-leibniz} and commutativity of $A_z$, both of which hold unconditionally.

We verify explicitly that the obstruction terms are consistent.
For $W = U+Vi$:
\[
2\,\partial_{\bar z} W = (U_x - \alpha V_y) + (V_x + U_y - \beta V_y)\,i + V\underbrace{(i_x + i\cdot i_y)}_{G},
\]
and similarly for $Z = S+Ti$:
\[
2\,\partial_{\bar z} Z = (S_x - \alpha T_y) + (T_x + S_y - \beta T_y)\,i + T\,G.
\]
In the product $(\partial_{\bar z} W)\,Z + W\,(\partial_{\bar z} Z)$, the $G$--terms contribute
\[
\tfrac{1}{2}VG\cdot Z + W\cdot \tfrac{1}{2}TG = \tfrac{1}{2}(VZ + WT)\,G.
\]
In $\partial_{\bar z}(WZ)$, writing $WZ = (US-\alpha VT) + (UT+VS-\beta VT)\,i$ and differentiating, the $G$--terms arise from $(VT)_x\,i_x + i\cdot(VT)_y\,i_y$ and evaluate to $(VZ + WT)\,G/2$ by the same algebra.
The two expressions match, confirming that no condition on $G$ is needed.
\end{proof}

\subsection{Conjugation and norm}

Define the conjugate root $\hat\imath := -\beta - i$ and the conjugation
\[
\widehat{U+Vi} := U + V\hat\imath = (U-\beta V) - Vi.
\]
The algebraic norm $N_z(W) := W\widehat{W} = U^2-\beta UV+\alpha V^2$ is a positive--definite quadratic form (by ellipticity) and is multiplicative: $N_z(WZ) = N_z(W)N_z(Z)$.
Every nonzero element is invertible: $W^{-1} = \widehat{W}/N_z(W)$.

\section{The transport picture}\label{sec:spectral}

\subsection{The transport map}

The structure polynomial has roots
\[
\lambda = \frac{-\beta+i\sqrt{\Delta}}{2}, \qquad \bar\lambda = \frac{-\beta-i\sqrt{\Delta}}{2},
\]
with $\im\lambda > 0$ by ellipticity.
The \emph{transport map} sends an $A$--valued section to a $\C$--valued function:
\begin{equation}\label{eq:spectral-map}
W = U+Vi \quad\longmapsto\quad W_\lambda := U+V\lambda \;\in\;\C.
\end{equation}
This is a pointwise algebra homomorphism, injective since $\im\lambda\neq 0$.

\begin{center}
\begin{tabular}{lll}
\textbf{Algebra} & & \textbf{Transport image} \\[4pt]
Generator $i(z)$ & $\longmapsto$ & $\lambda(x,y)\in\C$, $\im\lambda>0$ \\[2pt]
Conjugate $\hat\imath(z)$ & $\longmapsto$ & $\bar\lambda(x,y)$ \\[2pt]
Norm $N_z(W)$ & $\longmapsto$ & $|W_\lambda|^2$ \\[2pt]
Normalization $j(z)$ & $\longmapsto$ & $i$ (standard imaginary unit) \\[2pt]
\end{tabular}
\end{center}

\subsection{The universal transport law}

\begin{proposition}[Transport law for the spectral parameter]\label{prop:transport-law}
For every variable elliptic structure,
\begin{equation}\label{eq:lambda-transport}
\lambda_x + \lambda\,\lambda_y = G_\lambda,
\end{equation}
where $G = i_x + i\,i_y$ is the obstruction and $G_\lambda = G_0+G_1\lambda$ is its spectral image.
\end{proposition}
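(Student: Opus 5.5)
The plan is to work entirely with the transport map $W\mapsto W_\lambda$ of \eqref{eq:spectral-map}, applied to the section $G=i_x+i\,i_y$. The key observation is that the transport map sends the moving generator to $\lambda$ pointwise. Differentiating this identity needs care, because $i$ is an abstract generator whose derivatives $i_x, i_y$ are themselves $A$--valued sections, $i_x = a_0 + a_1 i$. So the first step is to compute $i_x$ and $i_y$ from the structure relation \eqref{eq:struct-rel}.

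Differentiating $i^2+\beta i+\alpha=0$ in $x$ gives
\[
(2i+\beta)\,i_x = -(\beta_x i+\alpha_x).
\]
The element $2i+\beta$ has norm $N_z(2i+\beta)=\Delta>0$, so it is invertible and
\[
i_x = -(2i+\beta)^{-1}(\beta_x i+\alpha_x),
\]
and likewise for $i_y$. Applying the algebra homomorphism $W\mapsto W_\lambda$ yields
\[
(i_x)_\lambda = -\frac{\beta_x\lambda+\alpha_x}{2\lambda+\beta}.
\]
On the other side, differentiating the scalar identity $\lambda^2+\beta\lambda+\alpha=0$ gives $\lambda_x = -(\beta_x\lambda+\alpha_x)/(2\lambda+\beta)$, which is legitimate since $\lambda$ is $C^1$ because $\Delta>0$ and $\beta,\alpha\in C^1$. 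Hence $(i_x)_\lambda=\lambda_x$, and by the same argument $(i_y)_\lambda=\lambda_y$.

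The second step uses the homomorphism property once more:
\[
G_\lambda = (i_x)_\lambda + \lambda\,(i_y)_\lambda = \lambda_x+\lambda\lambda_y,
\]
which is \eqref{eq:lambda-transport}.

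The main obstacle is making precise what $i_x$ means. The paper writes $W_x = U_x+V_x i+V i_x$, so $i_x$ must be given a meaning as an element of $A_z$. I would fix this by the identification above: the unique section satisfying the differentiated structure relation, expressed in the basis $\{1,i\}$. This is consistent with the Leibniz computation in Proposition~\ref{prop:leibniz}. If instead one realizes $A_z$ concretely, for example through the transport image, the computation is immediate and the only remaining check is that the denominator $2\lambda+\beta=i\sqrt{\Delta}$ never vanishes, which follows from ellipticity \eqref{eq:ellipticity}.
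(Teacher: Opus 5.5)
Your proposal is correct and follows essentially the same route as the paper: you differentiate the algebra relation $i^2+\beta i+\alpha=0$ and the scalar relation $\lambda^2+\beta\lambda+\alpha=0$, then compare the results through the spectral homomorphism $i\mapsto\lambda$, using $2\lambda+\beta=i\sqrt{\Delta}\neq 0$. The only difference is bookkeeping: the paper first combines $i_x+i\,i_y$ into one fraction (reducing $i^2$ with the structure relation) before applying the map, whereas you map $i_x$ and $i_y$ separately to get $(i_x)_\lambda=\lambda_x$ and $(i_y)_\lambda=\lambda_y$, which is slightly cleaner and avoids that reduction step.
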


\begin{proof}
Differentiate the spectral relation $\lambda^2+\beta\lambda+\alpha=0$ with respect to $x$ and $y$:
\[
(2\lambda+\beta)\,\lambda_x = -\alpha_x - \beta_x\lambda, \qquad
(2\lambda+\beta)\,\lambda_y = -\alpha_y - \beta_y\lambda.
\]
Since $\im\lambda > 0$, we have $2\lambda+\beta = i\sqrt{\Delta}\neq 0$, so both expressions are well--defined.
Compute:
\begin{align*}
\lambda_x + \lambda\,\lambda_y
&= \frac{-\alpha_x - \beta_x\lambda + \lambda(-\alpha_y - \beta_y\lambda)}{2\lambda+\beta} \\
&= \frac{-\alpha_x + \beta_y\alpha + (-\beta_x + \beta_y\beta - \alpha_y)\,\lambda}{2\lambda+\beta},
\end{align*}
where we used $\lambda^2 = -\beta\lambda - \alpha$ to eliminate $\beta_y\lambda^2$.

The same differentiation applied to the algebra--level relation $i^2+\beta\,i+\alpha = 0$ gives $(2i+\beta)\,i_x = -\alpha_x - \beta_x\,i$ and $(2i+\beta)\,i_y = -\alpha_y - \beta_y\,i$.  Hence
\[
G = i_x + i\,i_y = \frac{(-\alpha_x+\beta_y\alpha) + (-\beta_x+\beta_y\beta-\alpha_y)\,i}{2i+\beta},
\]
and applying the spectral map $i\mapsto\lambda$ to both sides yields $G_\lambda = \lambda_x+\lambda\lambda_y$.
\end{proof}

\subsection{Universal intertwining}

\begin{theorem}[Universal transport identity]\label{thm:intertwine}
For every $A$--valued $C^1$ section $W$,
\begin{equation}\label{eq:intertwine}
2(\partial_{\bar z} W)_\lambda = (W_\lambda)_x + \lambda\,(W_\lambda)_y.
\end{equation}
In particular, $\partial_{\bar z} W = 0$ if and only if $f_x + \lambda\, f_y = 0$, where $f := W_\lambda$.
\end{theorem}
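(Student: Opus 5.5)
The plan is a direct computation on both sides of \eqref{eq:intertwine}, followed by a comparison of the two results. The one nontrivial ingredient is the transport law of Proposition~\ref{prop:transport-law}, which shows that the zeroth-order term on each side is the same. No rigidity is assumed anywhere.

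\textbf{Left-hand side.} Write $W=U+Vi$. By the product rule applied to the moving generator, $W_x=U_x+V_x\,i+V\,i_x$ and $W_y=U_y+V_y\,i+V\,i_y$. Hence
\[
W_x+iW_y=U_x+V_x\,i+iU_y+i^2V_y+V\,(i_x+i\,i_y).
\]
Substituting the structure relation \eqref{eq:struct-rel} in the form $i^2=-\beta i-\alpha$ gives the expansion already recorded in the proof of Proposition~\ref{prop:leibniz}:
\[
2\,\partial_{\bar z}W=(U_x-\alpha V_y)+(V_x+U_y-\beta V_y)\,i+V\,G,
\qquad G=G_0+G_1 i .
\]
This is now an element of $A_z$ written in the basis $\{1,i\}$. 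The transport map is real-linear and sends $i\mapsto\lambda$, so
\[
2(\partial_{\bar z}W)_\lambda=(U_x-\alpha V_y)+(V_x+U_y-\beta V_y)\lambda+V\,G_\lambda .
\]
Here I use that the spectral image of the product $VG$ is $V G_\lambda$, because $V$ is real.

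\textbf{Right-hand side.} For $f=W_\lambda=U+V\lambda$, differentiate as an ordinary complex-valued function. This gives
\[
f_x+\lambda f_y=U_x+\lambda V_x+\lambda U_y+\lambda^2V_y+V(\lambda_x+\lambda\lambda_y).
\]
Using $\lambda^2=-\beta\lambda-\alpha$, the fourth term becomes $-\alpha V_y-\beta\lambda V_y$. By Proposition~\ref{prop:transport-law}, the last term equals $V\,G_\lambda$. Collecting terms, this matches the left-hand side term by term, which proves \eqref{eq:intertwine}.

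\textbf{The equivalence.} The statement $\partial_{\bar z}W=0\iff f_x+\lambda f_y=0$ follows from injectivity of the transport map, which holds because $\im\lambda\neq0$. Indeed, $\partial_{\bar z}W=0$ if and only if $(\partial_{\bar z}W)_\lambda=0$.

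\textbf{Main obstacle.} The only delicate point is the zeroth-order term $V\,G_\lambda$. On the algebra side it comes from the derivatives of the moving generator $i$; on the complex side it comes from the derivatives of $\lambda$. Proposition~\ref{prop:transport-law} is exactly what identifies these two contributions. Everything else is bookkeeping with the structure relation.
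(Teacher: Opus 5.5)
Your proposal is correct and follows the paper's proof essentially step for step. You expand $2\,\partial_{\bar z}W$ in the basis $\{1,i\}$ with zeroth-order term $VG$, apply $i\mapsto\lambda$, differentiate $U+V\lambda$ directly using $\lambda^2=-\beta\lambda-\alpha$, and match the $V G_\lambda$ terms via Proposition~\ref{prop:transport-law}; your explicit appeal to injectivity of the transport map for the ``in particular'' clause spells out a step the paper leaves implicit.
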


\begin{proof}
Write $W = U+Vi$ and $G = G_0+G_1\,i$ for the obstruction.

\medskip
\noindent\textbf{Left side.}\;
From the algebra--level Cauchy--Riemann decomposition (Proposition~\ref{prop:homog}):
\[
2\,\partial_{\bar z} W
= (U_x - \alpha V_y + VG_0) + (V_x + U_y - \beta V_y + VG_1)\,i.
\]
Applying the spectral map $i\mapsto\lambda$:
\begin{equation}\label{eq:intertwine-lhs}
2(\partial_{\bar z} W)_\lambda
= (U_x - \alpha V_y + VG_0) + \lambda(V_x + U_y - \beta V_y + VG_1).
\end{equation}

\medskip
\noindent\textbf{Right side.}\;
From $W_\lambda = U + V\lambda$, differentiate:
\begin{align*}
(W_\lambda)_x + \lambda\,(W_\lambda)_y
&= U_x + V_x\lambda + V\lambda_x + \lambda(U_y + V_y\lambda + V\lambda_y) \\
&= (U_x - \alpha V_y) + (V_x + U_y - \beta V_y)\lambda + V(\lambda_x + \lambda\lambda_y),
\end{align*}
where we used $\lambda^2 = -\beta\lambda - \alpha$ to replace $V_y\lambda^2 = V_y(-\beta\lambda-\alpha)$.

\medskip
\noindent\textbf{Match.}\;
By Proposition~\ref{prop:transport-law}, $\lambda_x + \lambda\lambda_y = G_0 + G_1\lambda$.
Hence $V(\lambda_x + \lambda\lambda_y) = VG_0 + VG_1\lambda$, and the right side becomes \eqref{eq:intertwine-lhs}.
\end{proof}

Equation $f_x+\lambda f_y = 0$ is a first--order linear PDE with variable complex coefficient $\lambda(x,y)$.
The function $f$ is a \emph{passenger} on the structure determined by $\lambda$.

\subsection{Rigidity as the Burgers conservation law}\label{sec:transport}

\begin{corollary}\label{cor:rigid-spectral}
The structure is rigid ($G=0$) if and only if $\lambda$ satisfies the inviscid Burgers equation:
\begin{equation}\label{eq:burgers}
\lambda_x + \lambda\,\lambda_y = 0.
\end{equation}
\end{corollary}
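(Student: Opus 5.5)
The plan is to read the corollary directly off Proposition~\ref{prop:transport-law}, which gives the identity $\lambda_x+\lambda\lambda_y=G_\lambda$ for every variable elliptic structure, rigid or not. Both implications then reduce to comparing $G=0$ in the fiber algebra with $G_\lambda=0$ in $\C$.

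For the forward direction, assume $G=0$, so $G_0=G_1=0$. Then $G_\lambda=G_0+G_1\lambda=0$, and the transport law gives \eqref{eq:burgers} immediately.

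For the converse, assume $\lambda_x+\lambda\lambda_y=0$ on $\Omega$. By the transport law, $G_0+G_1\lambda=0$ pointwise, with $G_0$ and $G_1$ real-valued. Taking imaginary parts gives $G_1\,\im\lambda=0$. Ellipticity \eqref{eq:ellipticity} gives $\im\lambda=\sqrt{\Delta}/2>0$, so $G_1=0$. Taking real parts then gives $G_0=0$. Hence $G=0$, and this is just the injectivity of the transport map $W\mapsto W_\lambda$ already noted after \eqref{eq:spectral-map}.

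The only point needing care is that $G_0$ and $G_1$ are genuinely real, i.e.\ that $G$ is an honest $A$--valued section $G_0+G_1 i$. This holds by the explicit formula in the proof of Proposition~\ref{prop:transport-law}. There $G$ equals a quotient of an element with real coefficients by $2i+\beta$. That quotient is an invertible element of $A_z$ because its norm is $N_z(2i+\beta)=\Delta>0$, and its inverse is again of the form $U+Vi$ with $U,V$ real, so $G$ has real components. Beyond this check, the argument is a two-line consequence of the transport law.
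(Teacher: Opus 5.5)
Your proposal is correct and matches the paper's proof, which reads the corollary off the transport law $\lambda_x+\lambda\lambda_y=G_\lambda$ and then invokes injectivity of the spectral map. You make that injectivity explicit by taking imaginary parts with $\im\lambda=\sqrt{\Delta}/2>0$, and you add a check that $G_0,G_1$ are real via $N_z(2i+\beta)=\Delta>0$, a detail the paper leaves implicit.
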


\begin{proof}
By Proposition~\ref{prop:transport-law}, $\lambda_x+\lambda\lambda_y = G_\lambda$.  Since the spectral map is injective, $G_\lambda = 0$ if and only if $G = 0$.
\end{proof}

\begin{remark}[Conservative and advective forms]
The intertwining produces the \emph{advective} form $f_x+\lambda f_y$.
The \emph{divergence} (conservative) form
\[
f_x + (\lambda f)_y = f_x + \lambda f_y + \lambda_y f
\]
appears naturally in the Cauchy--Pompeiu formula because the 1--form $dy-\lambda\,dx$ is not closed: $d(dy-\lambda\,dx) = \lambda_y\,dx\wedge dy$.
\end{remark}

\section{The Poincar\'e disk picture}\label{sec:poincare}

\subsection{The Cayley transform}

The Cayley--M\"obius transform
\begin{equation}\label{eq:cayley}
\mu = -\frac{1+i\lambda}{1-i\lambda} = \frac{\lambda - i}{\lambda + i}
\end{equation}
is a biholomorphism $\{\im\lambda>0\}\to\D$, with inverse
\begin{equation}\label{eq:cayley-inv}
\lambda = i\,\frac{1+\mu}{1-\mu}.
\end{equation}

\begin{proof}[Verification of the inverse]
From $\mu(1-i\lambda) = -(1+i\lambda)$, expand: $\mu - i\mu\lambda = -1 - i\lambda$.
Collect $\lambda$ terms: $i\lambda(\mu - 1) = -(1+\mu)$, hence $\lambda = (1+\mu)/(i(1-\mu)) = i(1+\mu)/(1-\mu)$.
Check: $\lambda = i$ gives $\mu = (i-i)/(i+i) = 0$; conversely, $\mu=0$ gives $\lambda = i\cdot 1/1 = i$.
\end{proof}

\begin{center}
\begin{tabular}{lll}
\textbf{Transport} & & \textbf{Poincar\'e disk} \\[4pt]
$\lambda\in\C$, $\im\lambda>0$ & $\longleftrightarrow$ & $\mu\in\C$, $|\mu|<1$ \\[2pt]
$\lambda = i$ (standard) & $\longleftrightarrow$ & $\mu = 0$ (conformal) \\[2pt]
$\im\lambda = 0$ (degeneracy) & $\longleftrightarrow$ & $|\mu|=1$ \\[2pt]
\end{tabular}
\end{center}

\subsection{The Beltrami equation from the transport equation}

\begin{proposition}[Cauchy--Riemann to Beltrami]\label{prop:CR-to-Beltrami}
For any $C^1$ function $f:\Omega\to\C$,
\begin{equation}\label{eq:CR-to-Beltrami}
f_x + \lambda\,f_y = (1-i\lambda)(f_{\bar z}-\mu\,f_z).
\end{equation}
In particular, $f_x+\lambda f_y = 0$ if and only if $f_{\bar z} = \mu\,f_z$.
\end{proposition}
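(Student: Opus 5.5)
The identity \eqref{eq:CR-to-Beltrami} is a pointwise linear identity in the two first derivatives of $f$. I will prove it by rewriting the right-hand side in terms of $f_x$ and $f_y$ and matching coefficients. Here $z=x+iy$ is the auxiliary standard complex coordinate, used only to define the Wirtinger derivatives
\[
f_z=\tfrac12(f_x-if_y),\qquad f_{\bar z}=\tfrac12(f_x+if_y),
\]
with the standard imaginary unit $i$ (not the moving generator). I will state this convention explicitly, since the paper reserves $z$ as a label for points.

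First, expand
\[
f_{\bar z}-\mu f_z=\tfrac12(1-\mu)f_x+\tfrac{i}{2}(1+\mu)f_y .
\]
Next, replace $\mu$ using the Cayley transform \eqref{eq:cayley} in the form $\mu=-(1+i\lambda)/(1-i\lambda)$. This gives
\[
(1-i\lambda)(1-\mu)=(1-i\lambda)+(1+i\lambda)=2,\qquad (1-i\lambda)(1+\mu)=(1-i\lambda)-(1+i\lambda)=-2i\lambda .
\]
Multiplying the expansion by $(1-i\lambda)$ then yields $f_x+\tfrac{i}{2}(-2i\lambda)f_y=f_x+\lambda f_y$, which is exactly \eqref{eq:CR-to-Beltrami}.

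For the equivalence ``in particular'', I need $1-i\lambda\neq 0$, which amounts to $\lambda\neq -i$. This holds because $\im\lambda>0$ by ellipticity, whereas $\im(-i)=-1$. Dividing by this nonzero factor shows that $f_x+\lambda f_y=0$ if and only if $f_{\bar z}=\mu f_z$.

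There is no real obstacle in this argument. The only point needing care is sign and convention bookkeeping: the two expressions for $\mu$ in \eqref{eq:cayley} must agree (they do, after multiplying numerator and denominator by $i$), and $i$ in this section must be read as the standard unit, not the moving generator.
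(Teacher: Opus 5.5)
Your proof is correct and is essentially the paper's argument. The paper expands the left side as $f_x+\lambda f_y=(1+i\lambda)f_z+(1-i\lambda)f_{\bar z}$ and factors out $1-i\lambda$ using $\mu=-(1+i\lambda)/(1-i\lambda)$; you expand the right side in $f_x,f_y$ and match coefficients, which is the same computation run in reverse. Your justification of $1-i\lambda\neq 0$ (it would force $\lambda=-i$, contradicting $\im\lambda>0$) is cleaner than the paper's, which first cites $|i\lambda|\neq 1$ (false at $\lambda=i$) before giving the valid reason $\re(i\lambda)=-\im\lambda<0$.
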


\begin{proof}
Since $\im\lambda > 0$, we have $|i\lambda| \neq 1$ (in fact $\re(i\lambda) < 0$), so $1-i\lambda\neq 0$.
Expanding:
\[
f_x + \lambda f_y = (1+i\lambda)f_z + (1-i\lambda)f_{\bar z} = (1-i\lambda)\bigl(f_{\bar z}-\mu\,f_z\bigr),
\]
using $\mu = -(1+i\lambda)/(1-i\lambda)$.
\end{proof}

\subsection{Rigidity as self--dilatation}\label{sec:self-dilat}

\begin{theorem}[Self--dilatation]\label{thm:self-dilat}
The Burgers condition $\lambda_x+\lambda\lambda_y=0$ is equivalent to
\begin{equation}\label{eq:self-dilat}
\mu_{\bar z} = \mu\,\mu_z.
\end{equation}
\end{theorem}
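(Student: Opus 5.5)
The plan is to apply Proposition~\ref{prop:CR-to-Beltrami} with the test function $f=\lambda$ itself. That proposition holds for any $C^1$ function $f:\Omega\to\C$, and $\lambda$ is $C^1$ because $\alpha,\beta\in C^1$ and $\Delta>0$ makes $\sqrt{\Delta}$ smooth in $\Delta$. Substituting $f=\lambda$ into \eqref{eq:CR-to-Beltrami} gives
\[
\lambda_x+\lambda\lambda_y=(1-i\lambda)\bigl(\lambda_{\bar z}-\mu\,\lambda_z\bigr).
\]
Since $1-i\lambda\neq 0$, the Burgers condition \eqref{eq:burgers} is equivalent to $\lambda_{\bar z}=\mu\lambda_z$. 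Here $\partial_z,\partial_{\bar z}$ are the standard Wirtinger operators in $x+iy$, as in the proof of Proposition~\ref{prop:CR-to-Beltrami}. The remaining task is to convert this into a statement about $\mu$.

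For that conversion I use the inverse Cayley map \eqref{eq:cayley-inv}, $\lambda=h(\mu):=i(1+\mu)/(1-\mu)$. The map $h$ is holomorphic in $\mu$ on $\D$, with derivative
\[
h'(\mu)=\frac{2i}{(1-\mu)^2},
\]
which never vanishes on $\D$. Because $h$ is holomorphic, the chain rule for Wirtinger derivatives has no $\bar\mu$-terms:
\[
\lambda_{\bar z}=h'(\mu)\,\mu_{\bar z},\qquad \lambda_z=h'(\mu)\,\mu_z.
\]
Therefore
\[
\lambda_{\bar z}-\mu\lambda_z=h'(\mu)\bigl(\mu_{\bar z}-\mu\mu_z\bigr).
\]
Since $h'(\mu)\neq 0$, this vanishes if and only if $\mu_{\bar z}=\mu\mu_z$.

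Chaining the two steps gives the single identity
\[
\lambda_x+\lambda\lambda_y=(1-i\lambda)\,\frac{2i}{(1-\mu)^2}\,\bigl(\mu_{\bar z}-\mu\mu_z\bigr),
\]
in which the prefactor never vanishes. This proves the equivalence, and in fact a stronger quantitative statement: the Burgers defect equals a nonvanishing multiple of the self-dilatation defect. Combined with Proposition~\ref{prop:transport-law}, it expresses $G_\lambda$ in terms of the self-dilatation defect.

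The only delicate point is the justification that the chain rule produces no $\mu_{\bar z}$ contribution to $\lambda_z$, and vice versa. This holds precisely because $\mu\mapsto\lambda$ is holomorphic, not merely smooth. I would also check $1-\mu\neq 0$, which holds since $|\mu|<1$, and the sign conventions in $f_x+\lambda f_y=(1+i\lambda)f_z+(1-i\lambda)f_{\bar z}$, using $\partial_x=\partial_z+\partial_{\bar z}$ and $\partial_y=i(\partial_z-\partial_{\bar z})$. Everything else is direct substitution.
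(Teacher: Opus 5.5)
Your proposal is correct and follows essentially the paper's route: apply Proposition~\ref{prop:CR-to-Beltrami} to $f=\lambda$, then relate $\lambda_{\bar z}-\mu\lambda_z$ to $\mu_{\bar z}-\mu\mu_z$ through the holomorphic chain rule for the Cayley transform. The only difference is cosmetic: you differentiate the inverse map $\lambda=i(1+\mu)/(1-\mu)$, whereas the paper differentiates $\mu=-(1+i\lambda)/(1-i\lambda)$; your combined prefactor $(1-i\lambda)\cdot 2i/(1-\mu)^2=\tfrac{i}{2}(1-i\lambda)^3$ is exactly the reciprocal of the paper's $-2i/(1-i\lambda)^3$.
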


\begin{proof}
From $\mu = -(1+i\lambda)/(1-i\lambda)$, the chain rule gives
\[
\mu_z = \frac{-2i\,\lambda_z}{(1-i\lambda)^2}, \qquad
\mu_{\bar z} = \frac{-2i\,\lambda_{\bar z}}{(1-i\lambda)^2}.
\]
Hence
\[
\mu_{\bar z}-\mu\,\mu_z = \frac{-2i}{(1-i\lambda)^2}\bigl(\lambda_{\bar z}-\mu\,\lambda_z\bigr).
\]
By Proposition~\ref{prop:CR-to-Beltrami} applied to $f=\lambda$:
$\lambda_{\bar z}-\mu\,\lambda_z = (\lambda_x+\lambda\lambda_y)/(1-i\lambda)$.
Therefore
\[
\mu_{\bar z}-\mu\,\mu_z = \frac{-2i}{(1-i\lambda)^3}\,(\lambda_x+\lambda\,\lambda_y).
\]
Since $(1-i\lambda)\neq 0$ (as $\im\lambda > 0$), this vanishes if and only if $\lambda_x+\lambda\lambda_y = 0$.
\end{proof}

\section{The canonical coordinate and the general rigid solution}\label{sec:canonical}

\subsection{Standing assumption}\label{sec:standing}

Throughout this section we work on an open set $\Omega\subset\R^2$ carrying a rigid structure $\lambda$ with $\im\lambda > 0$.
We assume:
\begin{enumerate}
\item[\textbf{(H1)}] $\Omega$ contains a segment $\{0\}\times I$ of the $y$--axis, where $I\subset\R$ is an open interval.
\end{enumerate}
When the structure arises from the Burgers transform $\lambda = h(y-\lambda x)$ of a holomorphic seed $h\in\mathrm{Hol}(U,\HH)$ with $U\cap\R\supseteq I$ \cite{AS-Burgers}, this is automatic: $(0,y)\in\Omega_h$ for all $y\in I$.

\begin{remark}[Choice of transversal]\label{rmk:transversal}
All results of this section hold with $\{0\}\times I$ replaced by any smooth transversal curve $\gamma\subset\Omega$ along which $\xi$ restricts to a coordinate.
The $y$--axis is convenient for the Burgers parametrization---where $\xi(0,y) = y$ gives the canonical normalization---but is not essential to the theory.
\end{remark}

\subsection{The canonical holomorphic coordinate}

\begin{proposition}[Canonical coordinate]\label{prop:canon-coord}
Let $\lambda$ be rigid on $\Omega$.
Then
\begin{equation}\label{eq:canon-coord}
\xi(x,y) := y - \lambda(x,y)\,x
\end{equation}
satisfies $\xi_x + \lambda\,\xi_y = 0$, i.e.\ $\xi$ is rigid holomorphic.
At $x=0$, $\xi(0,y)=y$.
\end{proposition}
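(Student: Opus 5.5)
The plan is a direct computation: apply the transport operator $L := \partial_x + \lambda\,\partial_y$ to $\xi = y - \lambda x$ with the product rule, and then invoke the rigidity hypothesis in its Burgers form, Corollary~\ref{cor:rigid-spectral}. No integration or solving is needed; the statement is an algebraic identity plus one use of \eqref{eq:burgers}.

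The steps are as follows. First compute the partial derivatives, using only that $\lambda\in C^1$ (which follows from $\alpha,\beta\in C^1$ and $\Delta>0$, since $\sqrt{\Delta}$ is then $C^1$):
\[
\xi_x = -\lambda - x\,\lambda_x, \qquad \xi_y = 1 - x\,\lambda_y .
\]
Next combine them:
\[
\xi_x + \lambda\,\xi_y = -\lambda - x\lambda_x + \lambda - x\lambda\lambda_y = -x\,(\lambda_x + \lambda\,\lambda_y).
\]
The $\pm\lambda$ terms cancel, which is the structural reason the coordinate $y - \lambda x$ is chosen. By Corollary~\ref{cor:rigid-spectral}, rigidity ($G=0$) gives $\lambda_x + \lambda\lambda_y = 0$, so the right-hand side vanishes identically on $\Omega$. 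More generally, Proposition~\ref{prop:transport-law} gives $\xi_x+\lambda\xi_y = -x\,G_\lambda$, which makes clear where the rigidity hypothesis enters.

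To conclude that $\xi$ is rigid holomorphic, I would use Theorem~\ref{thm:intertwine}. It identifies $\partial_{\bar z}W=0$ with $f_x+\lambda f_y=0$ for $f=W_\lambda$. Rigid holomorphy of a $\C$-valued function is understood through this identification, so $\xi$ is a passenger of the transport equation. If one wants an $A$-valued preimage explicitly, write $\xi = U + V\lambda$ with $V = \im\xi/\im\lambda$ and $U=\re\xi - V\re\lambda$, both real. The transport map is then a bijection onto $\C$ pointwise, so $W := U + Vi$ satisfies $\partial_{\bar z}W=0$ by \eqref{eq:intertwine}. This $W$ is $C^1$ because $\im\lambda>0$ and $\lambda\in C^1$.

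The normalization $\xi(0,y)=y$ is immediate from substituting $x=0$ into \eqref{eq:canon-coord}. It requires only that $(0,y)\in\Omega$, which (H1) guarantees for $y\in I$.

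There is no real obstacle. The only point needing care is regularity: $\xi$ must be $C^1$ so that the product rule applies and Theorem~\ref{thm:intertwine} is usable, and this is inherited from $\lambda$.
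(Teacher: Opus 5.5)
Your computation is the paper's proof: expand $\xi_x+\lambda\xi_y=(-\lambda-x\lambda_x)+\lambda(1-x\lambda_y)=-x(\lambda_x+\lambda\lambda_y)$, then make it vanish using the Burgers form of rigidity. The extra discussion of an $A$-valued preimage is correct but not needed; your formulas just return $W=y-x\,i$, because $\xi=y+(-x)\lambda$ is already the transport image of that section.
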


\begin{proof}
$\xi_x + \lambda\,\xi_y = (-\lambda - x\lambda_x) + \lambda(1-x\lambda_y) = -x(\lambda_x+\lambda\lambda_y) = 0$.
\end{proof}

In the Poincar\'e disk picture, the canonical coordinate takes the form
\begin{equation}\label{eq:xi-poincare}
\xi = \frac{-i\,(z+\mu\bar z)}{1-\mu}.
\end{equation}

\begin{proof}[Derivation of the Poincar\'e disk expression]
Using $\lambda = i(1+\mu)/(1-\mu)$, $x = (z+\bar z)/2$, $y = -i(z-\bar z)/2$:
\begin{align*}
\xi &= y - \lambda x = \frac{-i(z-\bar z)}{2} - \frac{i(1+\mu)}{1-\mu}\cdot\frac{z+\bar z}{2} \\
&= \frac{-i}{2}\cdot\frac{(z-\bar z)(1-\mu) + (1+\mu)(z+\bar z)}{1-\mu}.
\end{align*}
Expanding the numerator:
\[
z - \bar z - \mu z + \mu\bar z + z + \bar z + \mu z + \mu\bar z = 2z + 2\mu\bar z.
\]
Hence $\xi = -i(z+\mu\bar z)/(1-\mu)$.
\end{proof}

\noindent\textbf{Sanity check.}\;
At $\mu = 0$ (conformal case, $\lambda = i$): $\xi = -i(z+0)/1 = -iz = -i(x+iy) = y - ix$, recovering $\xi = y - ix = y - \lambda x$ with $\lambda = i$. $\checkmark$

\begin{remark}[The prefactor is essential]
The function $z+\mu\bar z$ does satisfy the Beltrami equation under rigidity: one computes $\eta_{\bar z} - \mu\,\eta_z = (\mu_{\bar z}-\mu\mu_z)\bar z = 0$.
However, $z+\mu\bar z$ does not equal $y - \lambda x$ without the position--dependent conformal factor $-i/(1-\mu)$.
The correct Poincar\'e disk expression is $\xi = [-i/(1-\mu)]\cdot(z+\mu\bar z)$.
\end{remark}

\subsection{The characteristic Jacobian \texorpdfstring{$\Phi$}{Phi}}\label{sec:Phi}

The change of variables $(\xi,\bar\xi) = (y-\lambda x,\, y-\bar\lambda x)$ is controlled by the function
\begin{equation}\label{eq:Phi-def}
\Phi(x,y) := \bar\xi_x + \lambda\,\bar\xi_y = (\lambda-\bar\lambda) - x(\bar\lambda_x + \lambda\bar\lambda_y).
\end{equation}
This arises from the chain rule:
\begin{equation}\label{eq:chain-rule}
f_x + \lambda f_y = f_\xi\underbrace{(\xi_x + \lambda\xi_y)}_{=\,0\text{ (rigidity)}} + f_{\bar\xi}\,\Phi.
\end{equation}

At $x=0$, $\Phi = \lambda-\bar\lambda = 2i\,\im\lambda \neq 0$ by ellipticity.
Away from $x=0$, ellipticity alone does not guarantee $\Phi\neq 0$.

\begin{proposition}[Factorization of $\Phi$]\label{prop:Phi-factor}
Under rigidity, $\Phi$ factors as
\begin{equation}\label{eq:Phi-expand}
\Phi = 2i\,\im\lambda\,(1 - x\bar\lambda_y).
\end{equation}
If $\lambda = \mathcal{B}[h]$ arises from the Burgers transform with holomorphic seed $h$ \cite{AS-Burgers}, then
\begin{equation}\label{eq:Phi-factor}
\Phi = \frac{2i\,\im\lambda}{\bar J},
\end{equation}
where $J = 1 + h'(w_0)\,x$ is the characteristic Jacobian and $w_0 = y-\lambda x$.
In particular, $\Phi\neq 0$ on the Burgers domain $\Omega_h = \{(x,y) : J(x,y)\neq 0\}$.
\end{proposition}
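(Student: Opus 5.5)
The plan is a direct computation from the definition \eqref{eq:Phi-def}, using the Burgers law from Corollary~\ref{cor:rigid-spectral} to remove the $x$--derivative of $\bar\lambda$. For the second part, I would differentiate the implicit Burgers relation $\lambda=h(y-\lambda x)$.

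\textbf{Step 1 (the factorization \eqref{eq:Phi-expand}).} Start from
\[
\Phi=(\lambda-\bar\lambda)-x(\bar\lambda_x+\lambda\bar\lambda_y).
\]
Since $x,y$ are real, conjugating the rigidity condition $\lambda_x+\lambda\lambda_y=0$ gives $\bar\lambda_x=-\bar\lambda\,\bar\lambda_y$. Substituting,
\[
\bar\lambda_x+\lambda\bar\lambda_y=(\lambda-\bar\lambda)\bar\lambda_y ,
\]
so that
\[
\Phi=(\lambda-\bar\lambda)(1-x\bar\lambda_y).
\]
Together with $\lambda-\bar\lambda=2i\,\im\lambda$, this is \eqref{eq:Phi-expand}. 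No further input is needed: the only ingredient is the conjugated Burgers equation.

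\textbf{Step 2 (the Burgers case).} Assume $\lambda$ satisfies $\lambda=h(w_0)$ with $w_0=y-\lambda x$ and $h$ holomorphic. Differentiating in $y$ by the chain rule gives
\[
\lambda_y=h'(w_0)\,(1-x\lambda_y),
\]
and hence $\lambda_y\,(1+h'(w_0)x)=h'(w_0)$. On the set where $J=1+h'(w_0)x\neq0$, we get $\lambda_y=h'(w_0)/J$. Therefore
\[
1-x\lambda_y=\frac{J-xh'(w_0)}{J}=\frac1J .
\]
Conjugating gives $1-x\bar\lambda_y=1/\bar J$. Inserting this into Step~1 yields \eqref{eq:Phi-factor}. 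Since $\im\lambda>0$ and $1/\bar J$ is finite and nonzero wherever $J\neq0$, it follows that $\Phi\neq0$ on $\Omega_h$.

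\textbf{Main obstacle.} The only delicate point is justifying that $\lambda$ is $C^1$ and that the chain-rule differentiation in Step~2 is legitimate. I would handle this with the holomorphic implicit function theorem applied to
\[
F(\lambda,x,y)=\lambda-h(y-\lambda x),
\]
which is holomorphic in $\lambda$ with $\partial_\lambda F=1+xh'(y-\lambda x)=J$. Thus $J\neq0$ is exactly the nondegeneracy condition the theorem requires. This is also consistent with the definition of the Burgers domain in \cite{AS-Burgers}, so smoothness of $\lambda$ on $\Omega_h$ can simply be cited from there.
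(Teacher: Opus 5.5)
Your proof is correct and follows the paper's argument: conjugate the Burgers law to get $\bar\lambda_x+\lambda\bar\lambda_y=(\lambda-\bar\lambda)\bar\lambda_y$, factor, and then use $\lambda_y=h'(w_0)/J$ to obtain $1-x\bar\lambda_y=1/\bar J$. The only difference is that the paper cites $\lambda_y=h'(w_0)/J$ from the Burgers-transform reference. You instead derive it by implicitly differentiating $\lambda=h(y-\lambda x)$ and justify the smoothness of $\lambda$ via the implicit function theorem with $\partial_\lambda F=J$, which makes the argument self-contained.
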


\begin{proof}
Conjugating the Burgers equation $\lambda_x + \lambda\lambda_y = 0$ gives $\bar\lambda_x + \bar\lambda\bar\lambda_y = 0$.
Hence
\[
\bar\lambda_x + \lambda\bar\lambda_y = (\bar\lambda_x + \bar\lambda\bar\lambda_y) + (\lambda - \bar\lambda)\bar\lambda_y = 2i\,(\im\lambda)\,\bar\lambda_y.
\]
Substituting into \eqref{eq:Phi-def}: $\Phi = 2i\,\im\lambda - x\cdot 2i\,(\im\lambda)\,\bar\lambda_y = 2i\,\im\lambda\,(1 - x\bar\lambda_y)$.

For the Burgers refinement: from \cite{AS-Burgers}, $\lambda_y = h'(w_0)/J$ where $J = 1+h'(w_0)\,x$.
Conjugating: $\bar\lambda_y = \overline{h'(w_0)}/\bar J$.  Hence
\[
1 - x\bar\lambda_y = 1 - \frac{x\,\overline{h'(w_0)}}{\bar J} = \frac{\bar J - x\,\overline{h'(w_0)}}{\bar J} = \frac{1}{\bar J},
\]
since $\bar J = 1 + \overline{h'(w_0)}\,x$.
\end{proof}

\begin{openproblem}[Automatic non--vanishing of $\Phi$]\label{op:Phi}
Is $\Phi\neq 0$ automatic for every rigid structure with $\im\lambda > 0$?

The factorization \eqref{eq:Phi-expand} shows that $\Phi=0$ requires $1 - x\bar\lambda_y = 0$, a codimension--two condition in $(x,y)$--space for generic rigid $\lambda$.
On the Burgers domain $\Omega_h$, Proposition~\ref{prop:Phi-factor} gives $\Phi\neq 0$ everywhere, and $\Omega_h$ cannot be extended past $\{J=0\}$ where $\Phi$ would first vanish.
No example of a rigid structure with $\im\lambda > 0$ and $\Phi = 0$ at an interior point is known.
A positive answer would promote all local results of this paper (Theorems~\ref{thm:gen-sol-local} and~\ref{thm:reduction}) to unconditionally local statements.
\end{openproblem}

\begin{proposition}[Jacobian of the real coordinate map]\label{prop:jacobian}
Write $\xi = p + iq$ with $p = y - x\,\re\lambda$, $q = -x\,\im\lambda$.
Under rigidity, the real Jacobian determinant of the map $(x,y)\mapsto(p,q)$ is
\begin{equation}\label{eq:jacobian-formula}
\det\frac{\partial(p,q)}{\partial(x,y)} = -\frac{i}{2}\,(1 - x\lambda_y)\,\Phi,
\end{equation}
and the following equivalence holds:
\begin{equation}\label{eq:jacobian-equiv}
\det\frac{\partial(p,q)}{\partial(x,y)} \neq 0 \qquad\Longleftrightarrow\qquad \Phi \neq 0.
\end{equation}
\end{proposition}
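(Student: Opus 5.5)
The plan is to compute the real Jacobian through the complex Jacobian of the pair $(\xi,\bar\xi)$, and then to show that the extra factor $1-x\lambda_y$ can never vanish.

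\textbf{Step 1: reduce to a complex determinant.} Since $p=(\xi+\bar\xi)/2$ and $q=(\xi-\bar\xi)/(2i)$, the linear change from $(p,q)$ to $(\xi,\bar\xi)$ has determinant $-1/(2i)\cdot 2\cdot\frac12\cdot\ldots$; concretely
\[
\det\frac{\partial(\xi,\bar\xi)}{\partial(x,y)}=\det\begin{pmatrix}1&i\\1&-i\end{pmatrix}\det\frac{\partial(p,q)}{\partial(x,y)}=-2i\,\det\frac{\partial(p,q)}{\partial(x,y)}.
\]
So it suffices to show $\xi_x\bar\xi_y-\xi_y\bar\xi_x=-(1-x\lambda_y)\Phi$.

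\textbf{Step 2: use rigidity to express everything via $\xi_y$.} By Proposition~\ref{prop:canon-coord}, $\xi_x=-\lambda\xi_y$. Substituting, I get
\[
\xi_x\bar\xi_y-\xi_y\bar\xi_x=-\xi_y(\bar\xi_x+\lambda\bar\xi_y)=-\xi_y\Phi .
\]
Moreover $\xi_y=1-x\lambda_y$ directly from \eqref{eq:canon-coord}. Combining this with Step~1 gives $\det\partial(p,q)/\partial(x,y)=\frac{1}{2i}(1-x\lambda_y)\Phi=-\frac{i}{2}(1-x\lambda_y)\Phi$, which is \eqref{eq:jacobian-formula}.

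\textbf{Step 3: the equivalence, which is the main point.} The direction ``$\det\neq0\Rightarrow\Phi\neq0$'' is immediate. For the converse I need that $\Phi\neq0$ forces $1-x\lambda_y\neq0$. I would use the factorization of Proposition~\ref{prop:Phi-factor}, $\Phi=2i\,\im\lambda\,(1-x\bar\lambda_y)$. Since $x$ is real, $1-x\bar\lambda_y=\overline{1-x\lambda_y}$. Hence
\[
\Phi=2i\,\im\lambda\,\overline{(1-x\lambda_y)},
\]
so $\Phi$ and $1-x\lambda_y$ vanish simultaneously, because $\im\lambda>0$. This also yields the cleaner form
\[
\det\frac{\partial(p,q)}{\partial(x,y)}=\im\lambda\,|1-x\lambda_y|^2,
\]
which is real and nonnegative, as a consistency check. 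The only delicate point is keeping the sign and orientation constants straight in Step~1; I would verify them at $x=0$, where $p=y$, $q=-x\im\lambda$, the determinant equals $\im\lambda$, and indeed $-\frac{i}{2}\cdot1\cdot 2i\im\lambda=\im\lambda$.
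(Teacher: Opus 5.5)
Your proof is correct and follows the paper's own route. You factor through $(\xi,\bar\xi)$, use $\xi_x=-\lambda\xi_y$ to get $-\xi_y\Phi$ with $\xi_y=1-x\lambda_y$, and deduce the equivalence from $\Phi=2i\,\im\lambda\,(1-x\bar\lambda_y)$ together with $1-x\bar\lambda_y=\overline{1-x\lambda_y}$; the only difference is that you use the linear change in the direction with determinant $-2i$, whereas the paper uses its inverse, with determinant $i/2$.

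Your extra identity $\det\partial(p,q)/\partial(x,y)=\im\lambda\,|1-x\lambda_y|^2$ is a genuine bonus: it shows the map is orientation-preserving wherever $\Phi\neq0$, which the paper records only at $x=0$ and on the Burgers domain. The garbled half-sentence at the start of Step~1 should be deleted.
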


\begin{proof}
The map $(x,y)\mapsto(p,q)$ factors as $(x,y)\mapsto(\xi,\bar\xi)\mapsto(p,q)$, where $p = (\xi+\bar\xi)/2$ and $q = (\xi-\bar\xi)/(2i)$.
The second factor has constant Jacobian determinant $i/2$.

For the first factor, the chain rule identity \eqref{eq:chain-rule} gives $\xi_x + \lambda\xi_y = 0$ (rigidity) and $\bar\xi_x + \lambda\bar\xi_y = \Phi$.
From $\xi_x = -\lambda\xi_y$:
\begin{align*}
\det\frac{\partial(\xi,\bar\xi)}{\partial(x,y)}
&= \xi_x\bar\xi_y - \xi_y\bar\xi_x
= (-\lambda\xi_y)\bar\xi_y - \xi_y(\Phi - \lambda\bar\xi_y) \\
&= -\lambda\xi_y\bar\xi_y - \xi_y\Phi + \lambda\xi_y\bar\xi_y
= -\xi_y\,\Phi
= -(1-x\lambda_y)\,\Phi.
\end{align*}
Hence
\[
\det\frac{\partial(p,q)}{\partial(x,y)} = \frac{i}{2}\cdot\bigl(-(1-x\lambda_y)\,\Phi\bigr) = -\frac{i}{2}\,(1-x\lambda_y)\,\Phi.
\]

For the equivalence: the determinant vanishes if and only if $1-x\lambda_y = 0$ or $\Phi = 0$.
Since $x\in\R$, the expressions $1-x\lambda_y$ and $1-x\bar\lambda_y$ are complex conjugates of each other, and therefore vanish simultaneously.
By the factorization \eqref{eq:Phi-expand}, $\Phi = 2i\,\im\lambda\,(1-x\bar\lambda_y)$, so $\Phi = 0$ if and only if $1-x\bar\lambda_y = 0$ (since $\im\lambda > 0$), which holds if and only if $1-x\lambda_y = 0$ (by the conjugation just noted).
Therefore the two factors vanish simultaneously, and the equivalence holds.
\end{proof}

\begin{remark}[Verifications]\label{rmk:jacobian}
At $x=0$: $1-x\lambda_y = 1$ and $\Phi = 2i\,\im\lambda$, so
$\det|_{x=0} = -(i/2)(1)(2i\,\im\lambda) = \im\lambda > 0$. $\checkmark$

On the Burgers domain: if $\lambda = \mathcal{B}[h]$, then $1-x\lambda_y = 1/J$ and $\Phi = 2i\,\im\lambda/\bar J$ (Proposition~\ref{prop:Phi-factor}), giving
\begin{equation}\label{eq:jacobian-burgers}
\det\frac{\partial(p,q)}{\partial(x,y)} = \frac{\im\lambda}{|J|^2},
\end{equation}
which is strictly positive on all of $\Omega_h$.
\end{remark}

\subsection{The general solution: local form}\label{sec:gen-sol}

\begin{theorem}[General rigid holomorphic function, local]\label{thm:gen-sol-local}
Let $\lambda$ be a rigid structure on $\Omega$ and let $z_0\in\Omega$ be a point where $\Phi(z_0)\neq 0$.
Then there exists a neighbourhood $V\ni z_0$ such that:
a $C^1$ function $f:V\to\C$ satisfies $f_x + \lambda\,f_y = 0$ on $V$
if and only if there exists a holomorphic function $g:\xi(V)\to\C$ such that
\begin{equation}\label{eq:gen-sol}
f(x,y) = g\big(\xi(x,y)\big) \qquad\text{on }V.
\end{equation}
\end{theorem}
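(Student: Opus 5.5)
The plan is to treat $\xi$ as a local real coordinate chart and to rewrite the operator $\partial_x+\lambda\partial_y$ in the variables $(\xi,\bar\xi)$ by means of the chain rule identity \eqref{eq:chain-rule}.

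\textbf{Step 1: $\xi$ is a local diffeomorphism.} Since $\Phi(z_0)\neq 0$ and $\Phi$ is continuous, Proposition~\ref{prop:jacobian} shows that the real Jacobian of $(x,y)\mapsto(p,q)=(\re\xi,\im\xi)$ is nonzero at $z_0$. This requires $\xi\in C^1$. I will assume $\lambda\in C^1$, which holds because $\alpha,\beta\in C^1$ and $\Delta>0$. By the inverse function theorem there are a neighbourhood $V\ni z_0$, which I take to be a small disc, and an open set $\xi(V)\subset\C$ such that $\xi:V\to\xi(V)$ is a $C^1$ diffeomorphism. Shrinking $V$, I may also assume $\Phi\neq0$ on all of $V$.

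\textbf{Step 2: the direction ``$\Leftarrow$''.} Suppose $f=g\circ\xi$ with $g$ holomorphic. By the complex chain rule,
\[
f_x+\lambda f_y=g'(\xi)\,(\xi_x+\lambda\xi_y)=0,
\]
because $\xi_x+\lambda\xi_y=0$ by Proposition~\ref{prop:canon-coord}. No $\bar\xi$ term appears, since $g_{\bar\xi}=0$.

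\textbf{Step 3: the direction ``$\Rightarrow$''.} Let $f\in C^1(V)$ satisfy $f_x+\lambda f_y=0$, and define $g:=f\circ(\xi|_V)^{-1}$ on $\xi(V)$. Then $g\in C^1$ and $f=g\circ\xi$. Applying the Wirtinger chain rule to $f=g\circ\xi$ gives
\[
f_x+\lambda f_y=g_\xi\,(\xi_x+\lambda\xi_y)+g_{\bar\xi}\,(\bar\xi_x+\lambda\bar\xi_y)=g_{\bar\xi}\,\Phi .
\]
This is exactly \eqref{eq:chain-rule}, made rigorous now that $\xi$ is a genuine chart. Because $\Phi\neq0$ on $V$, it follows that $g_{\bar\xi}=0$ on $\xi(V)$. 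A $C^1$ function satisfying the Cauchy--Riemann equation is holomorphic, so $g$ is holomorphic.

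\textbf{Main obstacle.} The delicate step is Step 1, namely turning the pointwise condition $\Phi(z_0)\neq0$ into a local chart. The formula in Proposition~\ref{prop:jacobian} carries an extra factor $(1-x\lambda_y)$, and that proposition's proof shows this factor vanishes exactly when $\Phi$ does, so it causes no additional degeneracy. I also need the Wirtinger chain rule to apply. It does, because $g$ is only required to be real-$C^1$ in Step 3, with holomorphy deduced afterwards.
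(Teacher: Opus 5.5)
Your proposal is correct and follows essentially the same route as the paper: you use Proposition~\ref{prop:jacobian} and the inverse function theorem to make $\xi$ a local chart on a neighbourhood where $\Phi\neq 0$, the chain rule \eqref{eq:chain-rule} to convert $f_x+\lambda f_y=0$ into $\bar\partial$ in the $\xi$--coordinate, and Proposition~\ref{prop:canon-coord} for sufficiency. Your explicit choice $g:=f\circ(\xi|_V)^{-1}$, together with the Wirtinger chain rule applied to the real-$C^1$ function $g$, makes precise the step the paper dispatches as ``by the definition of holomorphicity.''
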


\begin{proof}
\textbf{Sufficiency.}
$f_x + \lambda f_y = g'(\xi)\cdot(\xi_x + \lambda\xi_y) = 0$ by Proposition~\ref{prop:canon-coord}.

\medskip
\textbf{Necessity.}
Since $\Phi(z_0)\neq 0$ and $\Phi$ is continuous, there exists a neighbourhood $V\ni z_0$ on which $\Phi\neq 0$.
By Proposition~\ref{prop:jacobian}, the map
\[
(x,y)\mapsto (p,q) = (\re\xi,\,\im\xi)
\]
has nonvanishing real Jacobian on $V$, hence is a local $C^1$ diffeomorphism by the inverse function theorem.
Shrinking $V$ if necessary, we may assume this map is a diffeomorphism onto its image.

In the coordinates $(p,q)$---equivalently $(\xi,\bar\xi)$---the chain rule \eqref{eq:chain-rule} gives
\[
f_x + \lambda f_y = f_{\bar\xi}\,\Phi \qquad\text{on }V.
\]
Since $\Phi\neq 0$ on $V$, the equation $f_x+\lambda f_y = 0$ is equivalent to $f_{\bar\xi} = 0$ on $V$.
But $\xi$ is a valid complex coordinate on $V$ (because $(p,q)$ is a diffeomorphism), so $f_{\bar\xi} = 0$ is the standard Cauchy--Riemann equation in the coordinate $\xi$.
By the definition of holomorphicity, $f = g(\xi)$ for a holomorphic function $g$ defined on $\xi(V)\subseteq\C$.
\end{proof}

\begin{remark}[Two conditions, two roles]
\emph{Rigidity} ($\lambda_x+\lambda\lambda_y=0$) kills the $f_\xi$ term in \eqref{eq:chain-rule}, making $\xi$ holomorphic.
\emph{$\Phi\neq 0$} ensures $\xi$ is a valid coordinate (the map $(x,y)\mapsto(p,q)$ is a local diffeomorphism), so that $f_{\bar\xi}=0$ is a genuine Cauchy--Riemann equation whose solutions are holomorphic functions of $\xi$.
At $x=0$, $\Phi\neq 0$ reduces to ellipticity ($\im\lambda>0$).
Away from $x=0$, it is a condition on the characteristic Jacobian (Proposition~\ref{prop:Phi-factor}).
\end{remark}

\subsection{The general solution: global form}\label{sec:gen-sol-global}

\begin{theorem}[General rigid holomorphic function, global]\label{thm:gen-sol-global}
Let $\lambda$ be a rigid structure on a connected domain $\Omega$ satisfying {\rm(H1)}.
Suppose:
\begin{enumerate}
\item[\textbf{(H2)}] $\Phi\neq 0$ on all of $\Omega$;
\item[\textbf{(H3)}] the canonical map $\xi\colon \Omega \to \C$ is injective.
\end{enumerate}
Then $f\colon\Omega\to\C$ satisfies $f_x+\lambda f_y = 0$ if and only if $f = g\circ\xi$ for a unique holomorphic function $g\colon\xi(\Omega)\to\C$.

Moreover, $g$ is uniquely determined by the initial data of $f$ on the $y$--axis: since $\xi(0,y) = y$,
\begin{equation}\label{eq:restriction}
g\bigl(\xi(0,y)\bigr) = f(0,y) \qquad\text{for } y\in I,
\end{equation}
which determines $g$ on the real interval $I$, viewed as a subset of $\xi(\Omega)\subset\C$ via the standard inclusion $\R\hookrightarrow\C$.
Since $g$ is holomorphic on the connected open set $\xi(\Omega)$ and $I$ has accumulation points in $\xi(\Omega)$, the identity theorem determines $g$ uniquely on all of $\xi(\Omega)$.
\end{theorem}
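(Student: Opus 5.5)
The plan is to globalize Theorem~\ref{thm:gen-sol-local} using (H2) and (H3), and then obtain uniqueness from the identity theorem. First, by (H2) and Proposition~\ref{prop:jacobian}, the real map $(x,y)\mapsto(p,q)=(\re\xi,\im\xi)$ has nonvanishing Jacobian everywhere on $\Omega$. It is therefore a local $C^1$ diffeomorphism and in particular an open map, so $\xi(\Omega)$ is open. It is also connected, being the continuous image of the connected set $\Omega$. By (H3) the map is a bijection onto $\xi(\Omega)$. A bijective local diffeomorphism is a global $C^1$ diffeomorphism, because its inverse is locally given by the inverse function theorem. Let $\psi:=\xi^{-1}\colon\xi(\Omega)\to\Omega$.

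Sufficiency is exactly as in the local theorem: $(g\circ\xi)_x+\lambda(g\circ\xi)_y=g'(\xi)(\xi_x+\lambda\xi_y)=0$ by Proposition~\ref{prop:canon-coord}. For necessity, given $f$ with $f_x+\lambda f_y=0$, define $g:=f\circ\psi$ on $\xi(\Omega)$; this is $C^1$ and satisfies $f=g\circ\xi$. Holomorphicity of $g$ is local. Near any point $\zeta_0=\xi(z_0)$, I would apply the chain rule \eqref{eq:chain-rule}, which says $0=f_x+\lambda f_y=g_{\bar\xi}\,\Phi$ after pulling back by the diffeomorphism. Since $\Phi\neq0$, this gives $g_{\bar\xi}=0$ near $\zeta_0$, so $g$ is holomorphic on all of $\xi(\Omega)$. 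An alternative is to invoke Theorem~\ref{thm:gen-sol-local} at $z_0$ to get a holomorphic $g_0$ near $\zeta_0$ with $f=g_0\circ\xi$ on some $V\ni z_0$. Injectivity of $\xi$ then forces $g_0=g$ on $\xi(V)$, an open neighbourhood of $\zeta_0$. The point here is that no patching or monodromy issue arises, because the global candidate $g=f\circ\psi$ is defined before anything local is done. This step, namely recognizing that (H3) plus (H2) yields a global inverse so that $g$ is single-valued, is the only real content. The main technical care is checking that the local-diffeomorphism-plus-injectivity argument really gives a global inverse that is continuous and differentiable.

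For uniqueness and the restriction formula, (H1) gives $\{0\}\times I\subset\Omega$. Since $\xi(0,y)=y$ by Proposition~\ref{prop:canon-coord}, we have $I\subset\xi(\Omega)$, and $g(y)=f(0,y)$ for $y\in I$. If $g_1,g_2$ are two holomorphic functions with $g_1\circ\xi=g_2\circ\xi=f$, then $g_1=g_2$ on $\xi(\Omega)$ directly by surjectivity of $\xi$ onto its image. The stronger claim is that $g$ is determined by the data $f(0,\cdot)$ on $I$ alone. For this, two holomorphic functions on the connected open set $\xi(\Omega)$ agreeing on the interval $I$, which has accumulation points inside $\xi(\Omega)$, coincide by the identity theorem. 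That gives \eqref{eq:restriction} and uniqueness.
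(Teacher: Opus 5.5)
Your proposal is correct and follows essentially the same route as the paper: (H2) via Proposition~\ref{prop:jacobian} makes $\xi$ a local diffeomorphism, (H3) makes $g$ single-valued, the chain rule \eqref{eq:chain-rule} with $\Phi\neq 0$ gives holomorphicity, and the identity theorem on $I\subset\xi(\Omega)$ gives uniqueness. The only difference is organizational. The paper patches the local $g_z$ from Theorem~\ref{thm:gen-sol-local}, whereas you define $g=f\circ\xi^{-1}$ globally first. Your version makes explicit what the paper's overlap step uses only implicitly, namely that injectivity gives $\xi(V_z)\cap\xi(V_w)=\xi(V_z\cap V_w)$.
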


\begin{proof}
By Theorem~\ref{thm:gen-sol-local}, on each point $z\in\Omega$ there is a neighbourhood $V_z$ and a holomorphic $g_z$ on $\xi(V_z)$ with $f = g_z\circ\xi$ on $V_z$.

On the overlap $V_z\cap V_w\neq\emptyset$: for any $(x,y)\in V_z\cap V_w$, $g_z(\xi(x,y)) = f(x,y) = g_w(\xi(x,y))$.
By (H3), $\xi$ is injective on $\Omega$, so $g_z$ and $g_w$ agree on $\xi(V_z\cap V_w)$, which is open (since $\xi$ is an open map by the local diffeomorphism property).

Since $\Omega$ is connected, the local functions $\{g_z\}$ patch to a single holomorphic function $g$ on $\xi(\Omega)$.

At $x=0$: $\xi(0,y) = y$, so $g(\xi(0,y)) = (g\circ\xi)(0,y) = f(0,y)$ for $y\in I$.
This determines $g$ on the set $\{\xi(0,y):y\in I\} = I \subset\xi(\Omega)$.
Since $I$ is an open interval in $\R\subset\C$, it has accumulation points in $\xi(\Omega)$.
By the identity theorem for holomorphic functions, $g$ is uniquely determined on all of $\xi(\Omega)$.
\end{proof}

\begin{remark}[Injectivity and branched solutions]\label{rmk:branches}
Hypothesis {\rm(H3)} is the price of insisting that $g$ be single--valued on a planar domain.
If $\Phi\neq 0$ on all of $\Omega$ but $\xi$ is not injective, the canonical map $\xi\colon\Omega\to\C$ is a local diffeomorphism (Proposition~\ref{prop:jacobian}), hence an unramified covering of its image.
The local theorem (Theorem~\ref{thm:gen-sol-local}) provides a holomorphic $g_z$ on each sheet.
On overlaps within the same sheet the $g_z$ agree, but across different sheets they may differ: a rigid holomorphic function $f$ corresponds to a \emph{multi--valued} holomorphic function on $\xi(\Omega)$, or equivalently a single--valued holomorphic function on the Riemann surface of the covering $\xi$.

The Vekua reduction (Theorem~\ref{thm:reduction}) goes through on each sheet regardless, since it is a local computation.
Accordingly, the classical toolkit (Corollary~\ref{cor:toolkit}) transfers to the multi--valued setting: the similarity principle, regularity, and unique continuation hold on each branch.
Hypothesis {\rm(H3)} is needed only when one requires a global single--valued $g$ on a domain in~$\C$.

A concrete illustration: if $\xi$ identifies two points $(x_1,y_1)\neq(x_2,y_2)$ with $\xi(x_1,y_1) = \xi(x_2,y_2) = w_0$, then a rigid holomorphic $f$ need not satisfy $f(x_1,y_1) = f(x_2,y_2)$, since $f$ may correspond to different branches of a multi--valued $g$ near $w_0$.
\end{remark}

\begin{corollary}[On the Burgers domain]\label{cor:burgers-domain}
If $\lambda = \mathcal{B}[h]$ arises from the Burgers transform of a holomorphic seed $h\in\mathrm{Hol}(U,\HH)$ with $U\cap\R\supseteq I$, then:
\begin{enumerate}
\item[(i)] Hypothesis {\rm(H1)} is automatic: the Burgers domain $\Omega_h$ contains $\{0\}\times I$.
\item[(ii)] Hypothesis {\rm(H2)} ($\Phi\neq 0$) holds on the entire Burgers domain $\Omega_h$, by Proposition~\ref{prop:Phi-factor}.
\item[(iii)] If additionally $\xi$ is injective on $\Omega_h$---which must be verified for each seed---then Theorem~\ref{thm:gen-sol-global} holds on $\Omega_h$.
\end{enumerate}
Injectivity of $\xi$ on $\Omega_h$ has been verified explicitly for the worked examples in \cite{AS-Burgers}: the $\delta$--family (where $\xi$ is explicitly invertible, Example~\ref{ex:delta-uniform}), and can be checked for the $\varepsilon$--family, exponential, and Cauchy kernel seeds by direct computation.

Among the three hypotheses {\rm(H1)--(H3)}, injectivity of $\xi$ is the only one not automatically guaranteed by the Burgers framework.
A general criterion for injectivity of $\xi$ in terms of the seed $h$ is the main open problem for globalizing the reduction.
\end{corollary}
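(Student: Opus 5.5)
The plan is to treat the three items separately, since each is a direct consequence of an earlier result plus one fact about the Burgers transform $\lambda=\mathcal{B}[h]$ from \cite{AS-Burgers}. That fact is the implicit relation $\lambda = h(y-\lambda x)$ on $\Omega_h=\{J\neq 0\}$, with $J = 1+h'(w_0)x$ and $w_0=y-\lambda x$.

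\textbf{Item (i).} At $x=0$ the implicit equation reads $\lambda(0,y)=h(y)$. This is well defined for $y\in I\subseteq U\cap\R$ and takes values in $\HH$ because $h$ does. There $J = 1+h'(w_0)\cdot 0 = 1\neq 0$, so $(0,y)\in\Omega_h$ for every $y\in I$. Hence $\{0\}\times I\subset\Omega_h$. If $\Omega_h$ is defined as a connected component through the axis, this is immediate as well. This is exactly the remark already made after (H1).

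\textbf{Item (ii).} I invoke Proposition~\ref{prop:Phi-factor}, which gives $\Phi = 2i\,\im\lambda/\bar J$. On $\Omega_h$ we have $J\neq 0$, so $\bar J$ is finite and nonzero. Ellipticity gives $\im\lambda>0$. Therefore $\Phi\neq 0$ throughout $\Omega_h$, which is (H2). As a cross-check, Remark~\ref{rmk:jacobian} gives the real Jacobian $\im\lambda/|J|^2>0$ there.

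\textbf{Item (iii).} Assume (H3) on $\Omega_h$. Then (H1)--(H3) all hold, provided $\Omega_h$ is connected (or one restricts to the component containing the axis segment). Theorem~\ref{thm:gen-sol-global} then applies verbatim.

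\textbf{Remaining assertions.} The statements about the $\delta$-family and the other seeds are verifications by example rather than part of the logical claim. I would cite Example~\ref{ex:delta-uniform} for the explicit inverse of $\xi$. The last paragraph is a summary: (H1) and (H2) were shown automatic in (i) and (ii), and (H3) is assumed in (iii).

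\textbf{Main obstacle.} The only genuine subtlety is connectedness and the precise definition of $\Omega_h$, namely whether it is the maximal connected domain of the implicit solution containing the axis. I would fix $\Omega_h$ as the connected component containing $\{0\}\times I$ on which the implicit function theorem applies, which requires $J\neq 0$. With that choice, all three items follow without further computation.
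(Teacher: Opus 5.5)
Your proposal is correct and follows the paper's own inline justification. Item (i) comes from evaluating $\lambda=h(y-\lambda x)$ and $J=1+h'(w_0)x$ at $x=0$, as in the remark after (H1); item (ii) comes from the factorization $\Phi=2i\,\im\lambda/\bar J$ of Proposition~\ref{prop:Phi-factor}; and item (iii) is a direct application of Theorem~\ref{thm:gen-sol-global}. Your caveat that $\Omega_h$ must be taken connected (e.g.\ the component containing $\{0\}\times I$) is worth keeping, because connectedness is an explicit hypothesis of Theorem~\ref{thm:gen-sol-global} that the corollary leaves implicit.
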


\begin{remark}[Status of the injectivity question]\label{rmk:injectivity-status}
By Remark~\ref{rmk:branches}, the reduction to standard Vekua holds on each branch regardless of injectivity.
The injectivity question is therefore about single--valuedness of the passenger $g$, not about whether the classical toolkit applies.

Single--valuedness is known in the following cases:
the $\delta$--family (explicit inverse, Example~\ref{ex:delta-uniform});
the $\varepsilon$--family and the exponential seed (verified by direct computation in \cite{AS-Burgers}).
For the Cauchy kernel seed, injectivity holds on the pre--shock domain by numerical evidence but has not been proved analytically.
A sufficient condition in terms of $h$ would close the gap between the local and global forms of the main theorem.
A natural conjecture is that $\xi$ is injective on all of $\Omega_h$ whenever $h$ is univalent on $U$, but this has not been established.
The difficulty is that injectivity of $\xi = y - \lambda(x,y)\,x$ involves the implicit dependence of $\lambda$ on $(x,y)$ through the Burgers equation; a direct approach via the inverse function theorem yields only local injectivity (already guaranteed by $\Phi \neq 0$), while a global argument requires controlling how level sets of $\xi$ interact across $\Omega_h$.
\end{remark}

\begin{remark}[No method of characteristics]
The ``characteristic ODE'' $dy/dx = \lambda$ is complex--valued with real $x,y$: it does not define a real flow in $\R^2$, and there are no real integral curves.
The canonical coordinate $\xi = y - \lambda x$ is a \emph{level--set} construction: one verifies $\xi_x + \lambda\xi_y = 0$ by a direct calculation (Proposition~\ref{prop:canon-coord}), without integrating any ODE.
The proof of Theorem~\ref{thm:gen-sol-local} uses $(\xi,\bar\xi)$ as a complex coordinate system, not a flow.
\end{remark}

\subsection{The uniformizing diffeomorphism}\label{sec:uniformizing}

\begin{proposition}[Uniformizing coordinates]\label{prop:uniformize}
Let $\Omega'\subseteq\Omega$ be an open set on which $\Phi\neq 0$ and $\xi$ is injective.
Write $\xi = p+iq$ with
\begin{equation}\label{eq:pq-coords}
p(x,y) = y - x\,\re\lambda(x,y), \qquad q(x,y) = -x\,\im\lambda(x,y).
\end{equation}
Then the map $(x,y)\mapsto(p,q)$ is a $C^1$ diffeomorphism from $\Omega'$ onto $\xi(\Omega')$, and $f$ is rigid holomorphic on $\Omega'$ if and only if $f$, expressed in $(p,q)$ coordinates, satisfies the standard Cauchy--Riemann equations.
\end{proposition}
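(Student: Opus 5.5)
The plan is to assemble the statement from Proposition~\ref{prop:jacobian}, the inverse function theorem, and the chain rule identity \eqref{eq:chain-rule}. No new computation should be needed.

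\textbf{Step 1: the map is a $C^1$ diffeomorphism onto $\xi(\Omega')$.} Since $\lambda\in C^1$, the map $(x,y)\mapsto(p,q)$ given by \eqref{eq:pq-coords} is $C^1$. By Proposition~\ref{prop:jacobian}, $\Phi\neq0$ on $\Omega'$ gives a nonvanishing real Jacobian determinant $-\tfrac{i}{2}(1-x\lambda_y)\Phi$ at every point of $\Omega'$. The inverse function theorem then makes the map a local $C^1$ diffeomorphism, and in particular an open map, so $\xi(\Omega')$ is open in $\C$. Injectivity of $\xi$ on $\Omega'$ makes the map a bijection onto its image. Its inverse is $C^1$ because it agrees near each image point with the local inverse supplied by the inverse function theorem. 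Hence we have a $C^1$ diffeomorphism $\Omega'\to\xi(\Omega')$.

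\textbf{Step 2: equivalence of rigid holomorphy with Cauchy--Riemann in $(p,q)$.} Given $f$ on $\Omega'$, set $F:=f\circ(p,q)^{-1}$ on $\xi(\Omega')$. Then $F$ is $C^1$ exactly when $f$ is. By the chain rule, $f_x+\lambda f_y = F_\xi(\xi_x+\lambda\xi_y)+F_{\bar\xi}(\bar\xi_x+\lambda\bar\xi_y)$, where $F_{\bar\xi}=\tfrac12(F_p+iF_q)$ is evaluated at $\xi(x,y)$. This is \eqref{eq:chain-rule}. The first bracket vanishes by Proposition~\ref{prop:canon-coord}, and the second bracket is $\Phi$. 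So $f_x+\lambda f_y = \Phi\,F_{\bar\xi}\circ\xi$. Since $\Phi\neq0$ on $\Omega'$, $f$ is rigid holomorphic if and only if $F_{\bar\xi}=0$ on $\xi(\Omega')$, which is precisely the standard Cauchy--Riemann system $F_p=F_q$-rotated, i.e. $\re$ and $\im$ parts satisfy the classical equations in $(p,q)$.

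\textbf{Main obstacle.} The only delicate point is justifying the chain rule \eqref{eq:chain-rule} with respect to the Wirtinger derivatives in $\xi$. This is legitimate only once $(p,q)$ is known to be a genuine $C^1$ coordinate system, which is why Step~1 must come first. To make it rigorous, I would write $f_x=F_pp_x+F_qq_x$ (and similarly for $f_y$) and convert $F_p,F_q$ into $F_\xi,F_{\bar\xi}$. This confirms the coefficients are $\xi_x+\lambda\xi_y$ and $\bar\xi_x+\lambda\bar\xi_y$. Alternatively, one can cite Theorem~\ref{thm:gen-sol-local} pointwise and use injectivity, as in the proof of Theorem~\ref{thm:gen-sol-global}, to get global single-valuedness of $F$. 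Here that is automatic, since $F$ is defined directly through the global inverse.
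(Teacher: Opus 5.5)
Your proof is correct and follows the paper's argument: Proposition~\ref{prop:jacobian} plus the inverse function theorem and injectivity give the diffeomorphism, and the chain rule \eqref{eq:chain-rule}, with $\xi_x+\lambda\xi_y=0$ and $\Phi\neq 0$, reduces rigid holomorphy to $F_{\bar\xi}=0$. The differences are cosmetic. You obtain the $C^1$ inverse directly from the local inverse-function-theorem inverses instead of citing invariance of domain. Also, your phrase ``$F_p=F_q$-rotated'' should simply read $F_p+iF_q=0$, i.e.\ $\tilde u_p=\tilde v_q$ and $\tilde u_q=-\tilde v_p$ for $F=\tilde u+i\tilde v$.
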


\begin{proof}
The map $(x,y)\mapsto(p,q) = (\re\xi,\im\xi)$ is $C^1$ (since $\lambda$ is $C^1$).
Its real Jacobian is nonvanishing on $\Omega'$ (since $\Phi\neq 0$; see Proposition~\ref{prop:jacobian}).
By the inverse function theorem, the map is a local diffeomorphism.
Being also injective on $\Omega'$ by hypothesis, it is a $C^1$ diffeomorphism onto its image by invariance of domain.

In the coordinates $(p,q)$, a function $f$ satisfies $f_x+\lambda f_y = 0$ if and only if $f_{\bar\xi}=0$ (by the chain rule and $\Phi\neq 0$), which is the standard Cauchy--Riemann equation $\tilde u_p = \tilde v_q$, $\tilde u_q = -\tilde v_p$ where $f = \tilde u + i\tilde v$.
\end{proof}

\begin{remark}[Algebraic uniformization, no PDE required]
The map $(x,y)\mapsto (p,q)$ is computed from $\lambda$ by pointwise arithmetic.
No PDE is solved, no Beltrami equation is invoked, and no MRMT machinery is needed.
On the Burgers domain $\Omega_h$, the Jacobian equals $\im\lambda/|J|^2$ (Remark~\ref{rmk:jacobian}), which is strictly positive.
\end{remark}

\begin{example}[The $\delta$--family]\label{ex:delta-uniform}
The $\delta$--family \cite{AS-delta} has $\lambda = (y+i\delta)/(1+x)$ on $\Omega = \{x>-1\}$.
The Burgers Jacobian is $J = 1+x$, giving $\Phi = 2i\delta/(1+x) \neq 0$ on $\Omega$.
The canonical coordinate
\[
\xi = \frac{y-i\delta\,x}{1+x}
\]
yields the uniformizing diffeomorphism
\[
p = \frac{y}{1+x}, \qquad q = \frac{-\delta\,x}{1+x},
\]
which is explicitly invertible: $x = -q/(\delta+q)$, $y = p\delta/(\delta+q)$.
Injectivity is manifest from the explicit inverse.
The real Jacobian is $\delta/(1+x)^2 > 0$ on $\Omega$.

The system that Beltrami solvers see with condition numbers $O(\delta^{-2})$ becomes standard Cauchy--Riemann in $(p,q)$.
This is the family studied in detail in \cite{AS-delta}, where the condition numbers reach $O(\delta^{-2})$ for Beltrami solvers while the canonical coordinate reduces the problem at $O(1)$ cost.
\end{example}

\subsection{Pilot and passenger}\label{sec:pilot-passenger}

By the Burgers transform \cite{AS-Burgers}, every rigid structure on a domain satisfying {\rm(H1)} arises from a holomorphic \emph{pilot} $h\in\mathrm{Hol}(U,\HH)$, $U\cap\R\supseteq I$, via $\lambda = h(y-\lambda x)$.
A rigid holomorphic function $f = g(\xi)$ involves two holomorphic functions:
\begin{itemize}
\item $h$---the \textbf{pilot}: builds the geometry through a nonlinear implicit equation;
\item $g$---the \textbf{passenger}: propagates through the geometry $h$ created.
\end{itemize}
Rigid VES is the theory of pairs $(h,g)$.
For fixed pilot, the passengers form a classical function space (Theorems~\ref{thm:gen-sol-local} and~\ref{thm:gen-sol-global}).
The space of pilots is classified in \cite{AS-Burgers}.

The relation to Riemann's analytic continuation is instructive.
At the passenger level, analytic continuation takes a germ of $g$ and propagates it along paths, hitting singularities and branch points.
At the pilot level, the Burgers transform takes initial data $h$ on the $y$--axis and propagates the \emph{structure} into the plane, hitting shocks ($J=0$), ellipticity loss, and domain boundaries.
Both processes are canonical, both are obstructed, and neither has a closed formula.
Each individual output is locally trivial; the content is in the global passage.

\section{Reduction of the rigid Vekua equation to standard form}\label{sec:vekua-reduction}

\subsection{The variable--algebra Vekua equation}

In the transport picture, the general rigid Vekua equation is
\begin{equation}\label{eq:rigid-vekua-transport}
f_x + \lambda\,f_y + 2A_\lambda\, f + 2B_\lambda\, \bar f = 2F_\lambda,
\end{equation}
where $A_\lambda$, $B_\lambda$, $F_\lambda$ are the transport images of the algebra--valued coefficients.

\subsection{The coordinate change}

On any open set $\Omega'\subseteq\Omega$ where $\Phi\neq 0$, the chain rule \eqref{eq:chain-rule} gives
\[
f_x + \lambda f_y = f_{\bar\xi}\,\Phi.
\]
Substituting into \eqref{eq:rigid-vekua-transport} and dividing by $\Phi$:

\begin{equation}\label{eq:standard-vekua}
\;f_{\bar\xi} + A'\,f + B'\,\bar f = F'\;
\end{equation}
where the transformed Vekua coefficients are
\begin{equation}\label{eq:transformed-coefficients}
A' = \frac{2A_\lambda}{\Phi}, \qquad
B' = \frac{2B_\lambda}{\Phi}, \qquad
F' = \frac{2F_\lambda}{\Phi}.
\end{equation}

\begin{theorem}[Reduction to standard Vekua]\label{thm:reduction}
Let $(\alpha,\beta)$ be a rigid variable elliptic structure on $\Omega$ and let
\[
\partial_{\bar z}^{(\alpha,\beta)} W + AW + B\widehat W = F
\]
be a Vekua equation with $A$--valued coefficients.
On any connected open subset $\Omega'\subseteq\Omega$ where $\Phi\neq 0$, the transport image $f = W_\lambda$ satisfies the standard Vekua equation \eqref{eq:standard-vekua}--\eqref{eq:transformed-coefficients} in the canonical coordinate $\xi = y - \lambda x$.
If, in addition, $\xi$ is injective on $\Omega'$, this is a global standard Vekua equation on the planar domain $\xi(\Omega')$.

In particular:
\begin{enumerate}
\item[(i)] The variable--coefficient principal part is eliminated: the operator $f_{\bar\xi}$ is the standard $\bar\partial$ in the $\xi$--coordinate.
\item[(ii)] The variable algebra is eliminated: \eqref{eq:standard-vekua} involves only standard complex conjugation.
\item[(iii)] The original data appear in the transformed coefficients via the transport map and division by $\Phi$.
\item[(iv)] No PDE is solved: the transport map is pointwise arithmetic, and $\xi$ is computed from $\lambda$.
\end{enumerate}
\end{theorem}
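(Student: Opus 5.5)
The plan is to transport the algebra-level equation to a scalar complex equation, and then change coordinates using the chain rule \eqref{eq:chain-rule}.

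\textbf{Step 1: transport.} Apply the transport map $W\mapsto W_\lambda$ to both sides of $\partial_{\bar z}W + AW + B\widehat W = F$. The map is a pointwise algebra homomorphism, so $(AW)_\lambda = A_\lambda W_\lambda$. Since $\hat\imath\mapsto\bar\lambda$, we get $(\widehat W)_\lambda = U+V\bar\lambda = \overline{W_\lambda}$ for real $U,V$. Theorem~\ref{thm:intertwine} gives $2(\partial_{\bar z}W)_\lambda = f_x+\lambda f_y$ with $f=W_\lambda$. Multiplying through by $2$ then yields exactly \eqref{eq:rigid-vekua-transport}. The transport map is injective, so this step loses no information: $W$ solves the original equation if and only if $f$ solves \eqref{eq:rigid-vekua-transport}.

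\textbf{Step 2: change of coordinates.} Fix $z_0\in\Omega'$. By Proposition~\ref{prop:jacobian}, $\Phi\neq0$ makes $(x,y)\mapsto(p,q)=(\re\xi,\im\xi)$ a local $C^1$ diffeomorphism near $z_0$, so $(\xi,\bar\xi)$ are legitimate complex coordinates there. The chain rule then gives
\[
f_x+\lambda f_y = f_\xi(\xi_x+\lambda\xi_y)+f_{\bar\xi}\,\Phi .
\]
Rigidity enters here, through Proposition~\ref{prop:canon-coord}: it kills the first term, leaving $f_x+\lambda f_y=\Phi f_{\bar\xi}$. Dividing \eqref{eq:rigid-vekua-transport} by the nonvanishing $\Phi$ produces \eqref{eq:standard-vekua} with the coefficients \eqref{eq:transformed-coefficients}. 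These coefficients are pulled back to $\xi$-coordinates via the local inverse of $\xi$, and they are continuous, or keep whatever regularity $A_\lambda,B_\lambda,F_\lambda$ have, since $\Phi$ is continuous and nonvanishing. Claims (i)--(iv) follow by inspection: the principal part is now $\partial_{\bar\xi}$, only ordinary conjugation appears, and every ingredient ($\lambda$, $\xi$, $\Phi$, $A_\lambda$, $B_\lambda$, $F_\lambda$) is obtained by pointwise algebra from $\alpha,\beta$ and the coefficients.

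\textbf{Step 3: globalization.} If $\xi$ is injective on $\Omega'$, Proposition~\ref{prop:uniformize} makes $\xi:\Omega'\to\xi(\Omega')$ a $C^1$ diffeomorphism onto an open planar set. The coefficients $A'\circ\xi^{-1}$, $B'\circ\xi^{-1}$, $F'\circ\xi^{-1}$ and the function $f\circ\xi^{-1}$ are then single-valued on $\xi(\Omega')$. The local identities from Step~2 agree on overlaps, because they are pointwise identities, so they assemble into one standard Vekua equation on $\xi(\Omega')$.

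\textbf{Main obstacle.} The only genuinely delicate point is making precise what $f_{\bar\xi}$ means. It is the Wirtinger derivative with respect to the coordinate system $(p,q)$, and this is meaningful only where that system is a diffeomorphism. This is why Step~2 is done locally through Proposition~\ref{prop:jacobian}, and why the global statement needs the injectivity hypothesis. A secondary check in Step~1 is the identity $(\widehat W)_\lambda=\overline{W_\lambda}$, which relies on $U,V$ being real-valued.
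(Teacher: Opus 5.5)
Your proposal is correct and follows the paper's argument: you transport the equation via Theorem~\ref{thm:intertwine} to obtain \eqref{eq:rigid-vekua-transport}, use rigidity to kill the $f_\xi$ term in \eqref{eq:chain-rule}, and divide by $\Phi$. Along the way you supply details the paper leaves implicit, namely the identity $(\widehat W)_\lambda=\overline{W_\lambda}$, the local-diffeomorphism justification via Proposition~\ref{prop:jacobian}, and the globalization via Proposition~\ref{prop:uniformize}; your one minor imprecision is that $\Phi$ is not obtained by pointwise algebra alone, since it involves the derivative $\bar\lambda_y$, though this solves no PDE and so leaves claim (iv) intact.
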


\begin{proof}
Rigidity ensures $\xi_x+\lambda\xi_y = 0$ (Proposition~\ref{prop:canon-coord}), killing the $f_\xi$ term in the chain rule.
The hypothesis $\Phi\neq 0$ ensures the division is valid.
The resulting equation \eqref{eq:standard-vekua} involves only the standard $\bar\partial$ operator (in the coordinate $\xi$) and standard complex conjugation (since the transport map has already sent $i\mapsto\lambda$ and the coordinate change has absorbed $\lambda$ into $\xi$).
\end{proof}

\begin{corollary}[Rigid holomorphic maps to standard holomorphic]\label{cor:holomorphic-reduction}
Setting $A = B = F = 0$ in Theorem~\ref{thm:reduction}: on any open set $\Omega'\subseteq\Omega$ where $\Phi\neq 0$ and $\xi$ is injective, the uniformizing diffeomorphism $(x,y)\mapsto(\re\xi,\im\xi)$ sends rigid holomorphic functions (solutions of $f_x+\lambda f_y = 0$) to standard holomorphic functions (solutions of $f_{\bar\xi} = 0$).
\end{corollary}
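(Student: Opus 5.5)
The plan is to specialize Theorem~\ref{thm:reduction} to $A=B=F=0$ and then reinterpret the resulting equation through the uniformizing diffeomorphism of Proposition~\ref{prop:uniformize}. The first step is to identify the homogeneous equation. By the universal transport identity (Theorem~\ref{thm:intertwine}), $\partial_{\bar z}W=0$ if and only if $f=W_\lambda$ satisfies $f_x+\lambda f_y=0$. Hence, with zero coefficients, the transport picture \eqref{eq:rigid-vekua-transport} reduces exactly to the rigid holomorphy equation. In \eqref{eq:transformed-coefficients} the transformed coefficients $A',B',F'$ all vanish identically, so \eqref{eq:standard-vekua} reads $f_{\bar\xi}=0$.

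The second step is to justify that this equivalence holds in both directions on $\Omega'$. By the chain rule \eqref{eq:chain-rule} together with rigidity (Proposition~\ref{prop:canon-coord}), we have $f_x+\lambda f_y=\Phi\,f_{\bar\xi}$. Since $\Phi\neq0$ on $\Omega'$, division by $\Phi$ is reversible, so $f_x+\lambda f_y=0$ holds if and only if $f_{\bar\xi}=0$.

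Here $f_{\bar\xi}$ must be understood as the genuine Wirtinger derivative in the coordinate $\xi=p+iq$. This requires $(x,y)\mapsto(p,q)$ to be a legitimate change of variables. Proposition~\ref{prop:uniformize} supplies this: the nonvanishing Jacobian (Proposition~\ref{prop:jacobian}) combined with the injectivity hypothesis makes it a $C^1$ diffeomorphism of $\Omega'$ onto $\xi(\Omega')$.

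The final step is to transport functions. Given a rigid holomorphic $f$ on $\Omega'$, set $\tilde f:=f\circ(p,q)^{-1}$ on $\xi(\Omega')$; this is $C^1$. By the previous step, $\partial_{\bar\xi}\tilde f=0$, which is equivalent to the standard Cauchy--Riemann system in $(p,q)$. Conversely, any holomorphic $\tilde f$ on $\xi(\Omega')$ pulls back to $\tilde f\circ\xi$, which is rigid holomorphic by the sufficiency part of Theorem~\ref{thm:gen-sol-local}. Thus the correspondence is a bijection.

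I expect the only delicate point to be the identification of the chain-rule quantity $f_{\bar\xi}$, computed formally via $(\xi,\bar\xi)$, with the honest $\partial_{\bar\xi}$ of the pushed-forward function $\tilde f$. I would handle this by writing $\partial_x$ and $\partial_y$ in terms of $\partial_p$ and $\partial_q$ via the inverse Jacobian, which is available because the map is a diffeomorphism. Global single-valuedness is automatic from injectivity, so no patching as in Theorem~\ref{thm:gen-sol-global} is needed.
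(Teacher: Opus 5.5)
Your proposal is correct and follows the paper's route: the corollary is Theorem~\ref{thm:reduction} with vanishing coefficients (so $f_x+\lambda f_y=\Phi f_{\bar\xi}$ with $\Phi\neq0$), combined with the diffeomorphism of Proposition~\ref{prop:uniformize}, and your converse via the sufficiency half of Theorem~\ref{thm:gen-sol-local} is a fine addition. One small correction to your ``delicate point'': expressing $\partial_x,\partial_y$ through $\partial_p,\partial_q$ uses the forward chain rule $\partial_x=p_x\partial_p+q_x\partial_q$, $\partial_y=p_y\partial_p+q_y\partial_q$, not the inverse Jacobian; the diffeomorphism is needed only to guarantee that $\tilde f=f\circ(p,q)^{-1}$ is $C^1$, so that this chain rule applies.
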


\begin{corollary}[Reduction on the Burgers domain]\label{cor:global-reduction}
If $\lambda = \mathcal{B}[h]$ for a holomorphic seed $h$, then $\Phi\neq 0$ on the entire Burgers domain $\Omega_h$ (Proposition~\ref{prop:Phi-factor}).
Hence the reduction of Theorem~\ref{thm:reduction} holds locally at every point of $\Omega_h$.
If, in addition, the canonical coordinate $\xi$ is injective on $\Omega_h$, then the reduction is global on the planar domain $\xi(\Omega_h)$.
\end{corollary}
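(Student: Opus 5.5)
The plan is to derive Corollary~\ref{cor:global-reduction} by checking the hypotheses of Theorem~\ref{thm:reduction} on $\Omega_h$; no new analysis is needed. First, Proposition~\ref{prop:Phi-factor} gives, for $\lambda=\mathcal{B}[h]$, the identity $\Phi = 2i\,\im\lambda/\bar J$. On $\Omega_h=\{J\neq 0\}$ the denominator is finite and nonzero, and $\im\lambda>0$ holds because $h$ takes values in $\HH$. Hence $\Phi\neq 0$ at every point of $\Omega_h$. Rigidity holds automatically, since $\lambda$ solves the Burgers equation by construction (Corollary~\ref{cor:rigid-spectral}).

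\textbf{Local reduction.} Fix $z_0\in\Omega_h$. Since $\Phi$ is continuous, we can choose a connected open neighbourhood $V\subseteq\Omega_h$ of $z_0$ on which $\Phi\neq0$. We may also shrink $V$ using Proposition~\ref{prop:jacobian} and the inverse function theorem, so that $(x,y)\mapsto(\re\xi,\im\xi)$ is a diffeomorphism on $V$. Applying Theorem~\ref{thm:reduction} with $\Omega'=V$ gives the standard Vekua equation \eqref{eq:standard-vekua}, with coefficients \eqref{eq:transformed-coefficients}, in the genuine complex coordinate $\xi$ on $\xi(V)$. This proves the local claim at every point.

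\textbf{Global reduction.} Now assume in addition that $\xi$ is injective on $\Omega_h$. By Proposition~\ref{prop:uniformize} with $\Omega'=\Omega_h$, the map $(x,y)\mapsto(p,q)$ is a $C^1$ diffeomorphism of $\Omega_h$ onto the open planar set $\xi(\Omega_h)$. Since $\Omega_h$ is connected (or we work componentwise), the second clause of Theorem~\ref{thm:reduction} yields a single standard Vekua equation on $\xi(\Omega_h)$. The coefficients $A'\circ\xi^{-1}$, $B'\circ\xi^{-1}$ and $F'\circ\xi^{-1}$ are well defined there because $\xi$ is invertible.

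\textbf{Main obstacle.} The only delicate point is bookkeeping. We must make sure that $\Omega_h$ (or each of its components) is connected and open, as Theorem~\ref{thm:reduction} requires. We must also confirm that $\im\lambda>0$ holds throughout $\Omega_h$, so that the factor $\im\lambda$ in $\Phi$ cannot vanish. I would settle both by appealing to the construction of $\Omega_h$ in \cite{AS-Burgers}, and if necessary restrict to the connected component containing $\{0\}\times I$.
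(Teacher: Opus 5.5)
Your proposal is correct and follows essentially the same route as the paper, which gives no separate proof: $\Phi = 2i\,\im\lambda/\bar J\neq 0$ on $\Omega_h$ by Proposition~\ref{prop:Phi-factor}, so Theorem~\ref{thm:reduction} applies near every point of $\Omega_h$. Under injectivity, Proposition~\ref{prop:uniformize} makes $\xi$ a diffeomorphism onto $\xi(\Omega_h)$, which gives the global statement, and your extra bookkeeping (connected neighbourhoods, shrinking so that $\xi$ is a genuine coordinate, handling components) is harmless and slightly more careful than the paper's one-line deduction.
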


\begin{corollary}[Inheritance of the classical toolkit]\label{cor:toolkit}
Let $\Omega'\subseteq\Omega$ be open, assume $\Phi\neq 0$ on $\Omega'$, and suppose $\xi$ is injective on $\Omega'$.
Let
\[
f_{\bar\xi}+A'f+B'\bar f=F'
\]
be the reduced standard Vekua equation on $\xi(\Omega')$, and assume the transformed coefficients satisfy the usual hypotheses of the classical theory (for example $A',B'\in L^p_{\mathrm{loc}}(\xi(\Omega'))$ with $p>2$ when required).
Then the corresponding classical results transfer to the rigid variable--algebra Vekua equation on $\Omega'$.
In particular, whenever the standard hypotheses hold, one obtains:
\begin{enumerate}
\item[(i)] the similarity principle;
\item[(ii)] $L^p$ regularity for distributional solutions;
\item[(iii)] the Weyl lemma;
\item[(iv)] unique continuation.
\end{enumerate}
Each is the classical result applied on $\xi(\Omega')$, then pulled back through $\xi$.
\end{corollary}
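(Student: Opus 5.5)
The plan is to reduce everything to the classical theory on the planar domain $\xi(\Omega')$ and then transport back. The tool is the uniformizing diffeomorphism of Proposition~\ref{prop:uniformize}. Since $\Phi\neq 0$ and $\xi$ is injective on $\Omega'$, the map $\Xi\colon(x,y)\mapsto(p,q)=(\re\xi,\im\xi)$ is a $C^1$ diffeomorphism of $\Omega'$ onto $\xi(\Omega')$, with Jacobian determinant $-\tfrac{i}{2}(1-x\lambda_y)\Phi$, which is nonvanishing by Proposition~\ref{prop:jacobian}. For a solution $W$ of the rigid Vekua equation, set $f=W_\lambda$. By Theorem~\ref{thm:intertwine} and Theorem~\ref{thm:reduction}, $\tilde f:=f\circ\Xi^{-1}$ satisfies $\tilde f_{\bar\xi}+A'\tilde f+B'\bar{\tilde f}=F'$ on $\xi(\Omega')$. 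Conversely, any solution $\tilde f$ of the reduced equation yields a solution $W=(\tilde f\circ\Xi)_\lambda^{-1}$, using pointwise injectivity and surjectivity of the transport map $A_z\to\C$. So the solution spaces correspond bijectively, and each of (i)--(iv) will be obtained by applying the classical statement to $\tilde f$ and pulling back.

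\textbf{Pointwise statements.} For (i), the classical similarity principle gives $\tilde f=e^{s}\,\phi$ with $\phi$ holomorphic on $\xi(\Omega')$ and $s$ Hölder continuous. Pulling back, $f=e^{s\circ\Xi}\,(\phi\circ\xi)$. By Theorem~\ref{thm:gen-sol-global}, or its local form Theorem~\ref{thm:gen-sol-local}, $\phi\circ\xi$ is rigid holomorphic. Applying the inverse transport map, which is an algebra homomorphism, gives $W=e^{S}\Psi$ with $\Psi$ rigid holomorphic, i.e.\ the similarity principle in the variable algebra. Unique continuation (iv) follows immediately. Zeros or vanishing on open sets or at infinite order correspond exactly under $\Xi$, since $\Xi$ is a homeomorphism that is $C^1$ with $C^1$ inverse, and the transport map is injective.

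\textbf{Distributional statements.} Items (ii) and (iii) require a definition of distributional solution for the rigid equation that matches the classical one under $\Xi$. I would define it by pulling back: $W$ is a weak solution on $\Omega'$ exactly when $\tilde f$ is a weak solution of the reduced equation on $\xi(\Omega')$. Alternatively, I would check compatibility with the natural weak form, testing against $\varphi\in C_c^\infty$ and integrating by parts using the divergence form $f_x+(\lambda f)_y$ from the Remark on conservative and advective forms. The change of variables would be handled by the density $|\det D\Xi|$, and one would verify that the adjoint picks up exactly the $\lambda_y$ term. Since $\Xi$ is only $C^1$, $L^p_{\mathrm{loc}}$ and $W^{1,p}_{\mathrm{loc}}$ are preserved under composition with $\Xi$, but $C^\infty$ test functions are not. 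I would therefore work with $C^1_c$ test functions, which is equivalent by density. Then the classical $L^p$ regularity and Weyl lemma for $\tilde f$ transfer to regularity of $f$ up to the $C^1$ class of $\Xi$. In particular, $f\in W^{1,p}_{\mathrm{loc}}$, and if $A'=B'=F'=0$ then $f$ is rigid holomorphic.

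\textbf{Main obstacle.} The step I expect to be hardest is this distributional bookkeeping. Transporting local integrability of $A',B'$ back to conditions on $A,B$ uses that $\Phi$ and $\det D\Xi$ are locally bounded away from zero and continuous. This is true on compact subsets of $\Omega'$, so all statements are local in nature and hold with "loc" exponents. The remaining steps are formal consequences of the bijection established at the start.
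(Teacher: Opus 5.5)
Your proposal is correct and takes the paper's route. The paper's whole justification is the corollary's closing sentence: transport $W\mapsto W_\lambda$, pass to the reduced equation of Theorem~\ref{thm:reduction} via the diffeomorphism of Proposition~\ref{prop:uniformize}, apply the classical result on $\xi(\Omega')$, and pull back through $\xi$; your version supplies the bookkeeping the paper omits.

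One side remark: the regularity issue you flag as the main obstacle mostly disappears. Since $\lambda$ itself satisfies $\lambda_x+\lambda\lambda_y=0$, Theorem~\ref{thm:gen-sol-local} gives $\lambda=g(\xi)$ locally with $g$ holomorphic. The implicit equation $\lambda=g(y-\lambda x)$ has $\lambda$--derivative $1+x\,g'(\xi)$, which cannot vanish because $\lambda_y(1+x\,g'(\xi))=g'(\xi)$. Hence $\lambda$, and therefore $\Xi$, is real-analytic wherever $\Phi\neq 0$. So smooth test functions and the weight $\Phi/\det D\Xi = 2i/\xi_y$ pull back without loss of regularity.
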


\begin{remark}[No independent distributional theory needed in the rigid case]
In the rigid case, once one works on an open set where $\Phi\neq 0$, the equation is locally equivalent to a standard Vekua equation in the coordinate $\xi$.
Accordingly, the distributional and regularity theory is inherited locally from the classical theory.
On domains where $\xi$ is injective, the transfer is global on $\xi(\Omega')$.
The genuinely new distributional questions arise in the non--rigid case ($G\neq 0$), where the canonical coordinate is unavailable.
\end{remark}

\subsection{Factoring the Beltrami output}

A rigid $\mu$ handed to a Beltrami solver yields a normalized homeomorphism $\varphi$ with $\varphi_{\bar z} = \mu\varphi_z$.
By Theorem~\ref{thm:gen-sol-local}, $\varphi = g(\xi)$ locally, where $\xi$ trivializes the Beltrami equation and $g$ is a conformal map.
The solver's output decomposes as $\varphi = g\circ\xi$:
\begin{enumerate}
\item[(i)] Trivialize the Beltrami equation: $\xi$ does this, by arithmetic.
\item[(ii)] Find a conformal map $g\colon\xi(\Omega)\to\varphi(\Omega)$: a classical Riemann mapping problem.
\end{enumerate}
For rigid $\mu$, the Beltrami equation is a misdiagnosis of the computational difficulty.
The difficulty is not in the variable coefficient; it is in the geometry of the domain $\xi(\Omega)$.

The decomposition $\varphi = g\circ\xi$ is specific to the rigid case.
For general $\mu$, there is no canonical coordinate that trivializes the Beltrami equation by arithmetic, and the solver's output has no such factorization.

\subsection{Complement to the measurable Riemann mapping theorem}

The results of this paper provide a complement to the MRMT.
The MRMT says: for any $\mu$ with $\|\mu\|_\infty < 1$, a uniformizing homeomorphism exists.
It is non--constructive---the proof proceeds by functional analysis, compactness, and $L^p$ estimates.

More precisely: the standard proof (Morrey--Bojarski--Ahlfors--Bers; see \cite{Ahlfors})
rewrites $f_{\bar z}=\mu f_z$ as a fixed--point equation
$g = \mu(1+Sg)$ for the Beurling--Ahlfors singular integral operator~$S$,
inverts $(I-\mu S)$ on $L^p$ by a Neumann series using the bound
$\|\mu S\|_{L^p}\le\|\mu\|_\infty\|S\|_{L^p\to L^p}<1$
for $p$ sufficiently close to~$2$ (Calder\'on--Zygmund theory),
and extracts a homeomorphic solution from the resulting $W^{1,p}$ function
by a compactness and degree--theory argument.
At no stage does the proof produce an explicit formula for $f$ in terms of $\mu$;
the output is an existence--and--uniqueness statement

The present paper says: for the rigid subclass ($\mu_{\bar z}=\mu\mu_z$), the uniformization is explicit.
The canonical coordinate $\xi = y - \lambda x$ is computed by arithmetic from $\lambda$; no PDE is solved, no functional analysis is invoked.

Recall from \cite{AS-monograph} that the \emph{normalized transport defect} $\rho_T := |T|/(\im\lambda)^2$, where $T = \lambda_x+\lambda\lambda_y$ is the transport obstruction, measures the departure from rigidity: $\rho_T = 0$ characterizes the rigid class.

\begin{center}
\renewcommand{\arraystretch}{1.3}
\begin{tabular}{lll}
 & \textbf{Rigid} ($\rho_T = 0$) & \textbf{Non--rigid} ($\rho_T \neq 0$) \\[4pt]
\hline
Uniformization & Explicit: $\xi = y-\lambda x$ & Requires MRMT \\[2pt]
Function theory & Classical (via $\xi$) & Vekua, similarity principle \\[2pt]
Structure classification & $\mathrm{Hol}(U,\HH)$ via Burgers \cite{AS-Burgers} & Open \\[2pt]
Obstruction & Shocks (geometric) & Analytic ($L^p$ regularity) \\[2pt]
Diagnostic invariant & $\rho_T = 0$ (characterizes class) & $\rho_T > 0$ (measures obstruction) \\[2pt]
\hline
\end{tabular}
\end{center}

The rigid sector is the locus where the MRMT's conclusion is reachable by elementary means.
The non--rigid sector is where the MRMT---or another extension theory---is genuinely needed.
The invariant $\rho_T$ measures not just distance from the standard structure, but distance from explicit uniformizability.

\subsection{Regularity of the transformed coefficients}

By \eqref{eq:Phi-factor}, on the Burgers domain
\[
|\Phi| = \frac{2\,\im\lambda}{|J|}.
\]
Hence on each compact set $K\subset\Omega_h$, the reciprocal $1/\Phi$ is bounded.
Precisely: if $\im\lambda\ge c_1>0$ and $|J|\le C$ on $K$, then
\[
|\Phi|\ge \frac{2c_1}{C}\qquad\text{on }K,
\]
and therefore
\[
\|A'\|_{L^p(K)} \le \frac{C}{2c_1}\,\|2A_\lambda\|_{L^p(K)},\qquad
\|B'\|_{L^p(K)} \le \frac{C}{2c_1}\,\|2B_\lambda\|_{L^p(K)}.
\]
Thus, viewed as pulled-back coefficients on the $(x,y)$--side, the transformed coefficients are locally of the same $L^p$ class as $A_\lambda,B_\lambda$ on every compact subset of $\Omega_h$.

For a general rigid structure not necessarily arising from the Burgers transform, the same conclusion holds on any compact set $K$ where $\Phi\neq 0$: the factorization $\Phi = 2i\,\im\lambda\,(1-x\bar\lambda_y)$ from Proposition~\ref{prop:Phi-factor} provides a lower bound $|\Phi|\ge c > 0$ on $K$ whenever $\im\lambda$ is bounded away from zero and $1-x\bar\lambda_y$ is bounded away from zero on $K$.

If $\Omega'\subseteq\Omega_h$ is an open set on which $\xi$ is injective, then $\xi\colon\Omega'\to\xi(\Omega')$ is a $C^1$ diffeomorphism by Proposition~\ref{prop:uniformize}.
In that case the coefficients descend to functions on $\xi(\Omega')$, and their $L^p_{\mathrm{loc}}$ regularity on $\xi(\Omega')$ follows from the usual change--of--variables theorem.

\subsection*{Acknowledgments}
\medskip
\noindent\textbf{Use of Generative AI Tools.}
Portions of the writing and editing of this manuscript were assisted by generative AI language tools.
These tools were used to improve clarity of exposition, organization of material, and language presentation.
All mathematical results, statements, proofs, and interpretations were developed, verified, and validated by the author.
The author takes full responsibility for the accuracy, originality, and integrity of all content in this work.

\end{document}